\newtheorem{lem}{Lemma}[section]
\newtheorem{thm}[lem]{Theorem}
\newtheorem{cor}[lem]{Corollary}
\numberwithin{equation}{section}
\def\D{\mathrm D}
\def\R{\mathbb R}
\def\C{\mathbb C}
\def\d{\mathrm d\,}
\def\e{\mathrm e}
\DeclareMathOperator{\rank}{rank}
\DeclareMathOperator{\bdiag}{b-diag}
\DeclareMathOperator{\spec}{spec}
\DeclareMathOperator{\dist}{dist}
\DeclareMathOperator{\supp}{supp}
\begin{document}
\begin{frontmatter}
\title{Diffusion phenomena for partially dissipative hyperbolic systems}
\author[M]{Jens Wirth}\fnref{f1}
\address[M]{Mathematisches Institut, LMU M\"unchen, Theresienstr. 39/1, 80339 M\"unchen\\
Institut f\"ur Analysis, Dynamik und Modellierung, Universit\"at Stuttgart, Pfaffenwaldring 57, 70569 Stuttgart}
\ead{jens.wirth@mathematik.uni-stuttgart.de}
\fntext[f1]{The research presented in this paper was partially supported by the German Science Foundation (DFG) with grant WI 2064/5-1.}

\date{\today}
\begin{abstract}
We consider a partially dissipative hyperbolic system with time-dependent coefficients and show that under natural assumptions its solutions behave like solutions to a parabolic problem modulo terms of faster decay.

This generalises the well-known diffusion phenomenon for damped waves and gives some further insights into the structure of dissipative hyperbolic systems.
\end{abstract}

\begin{keyword}
    hyperbolic systems \sep partial dissipation \sep 
    uniform Kalman rank condition \sep diffusion phenomenon

\MSC[2010]{35L05, 35L15}
\end{keyword} 
\end{frontmatter}

\section{Introduction}

The classical diffusion phenomenon observed by Hsiao--Liu \cite{Hsiao-Liu} and Nishihara \cite{Nishihara:1997} provides an asymptotic equivalence for solutions of the damped wave equation and corresponding solutions of the heat equation. We will recall the main results first before 
explaining our generalisations. If one considers the Cauchy problem for the damped wave equation
\begin{equation}
    u_{tt} - \Delta u + u_t = 0,\qquad u(0,\cdot)=u_0,\quad u_t(0,\cdot) = u_1
\end{equation}
on $\R^n$ together with the associated heat equation
\begin{equation}
   w_t = \Delta w,\qquad w(0,\cdot) = w_0 = u_0 + u_1,
\end{equation}
it is known that their solutions satisfy
\begin{equation}
    \| u(t,\cdot) - w(t,\cdot) \|_2 \le C t^{-1} \left( \|u_0\|_2 + \|u_1\|_2 \right)
\end{equation}
and corresponding estimates for higher order derivatives or in different $L^p$ norms. Note, that both solutions $u$ and $w$ do not decay in general. In this sense, the above estimate justifies to treat the solution of the heat equation as main term of the solution of the damped wave equation.

The above estimate is not the original one. There has been progress over the recent years to improve rates and constants, we refer only to a brief selection of papers. Nishihara \cite{Nishihara:2003} and Narazaki \cite{Narazaki:2004} extended the results to $L^p$--$L^q$ estimates,  Han--Milani \cite{Milani:2001} showed (independently) $L^1$-estimates for solutions. There have been extensions by Ikehata--Nishihara~\cite{Ikehata:2003c} and Chill--Hareaux \cite{Chill} to abstract evolution equations. Recently Radu--Todorova--Yordanov \cite{RTY:2011} used a different line of thought to estimate the difference of solutions in terms of the heat semigroup acting on the initial data. Their method allows to show
\begin{equation}
    \| u(t,\cdot) - w(t,\cdot) \|_2 \le C t^{-1} \left( \|\e^{t\Delta/2} u_0\|_2 + \|\e^{t \Delta/ 2}u_1\|_2 \right)
    + \e^{-t/16} \left( \|u_0\|_2 + \|u_1\|_{H^{-1}} \right)
\end{equation}
for $t\ge1$ and all initial data such that the norms on the right hand side are finite. Estimates of this form are valuable to the study of nonlinear damped wave equations and allow to show that critical exponents and associated global in time solutions in the supercritical cases are governed by the parabolic problem.

The author of this note extended the diffusion phenomenon to time-dependent dissipation terms in \cite{Wirth:2007}, see also \cite{Wirth:2010} for an overview of the results. There one considers
the wave equation
\begin{equation}\label{eq:CP-bdamped}
    u_{tt} - \Delta u + b(t) u_t = 0,\qquad u(0,\cdot)=u_0,\quad u_t(0,\cdot)=u_1
\end{equation}   
with (for simplicity monotone) smooth and positive coefficient function $b=b(t)$ subject to estimates
\begin{equation}
   \left| \partial_t^k b(t) \right| \le C_k b(t) \left(\frac1{1+t}\right)^k. 
\end{equation}
Such a dissipation term is called effective if $\lim_{t\to\infty} tb(t)=\infty$ and allows for a diffusion phenomen if in addition $1/b^3 \not\in \mathrm L^1(\R_+)$. Under this assumption solutions to the Cauchy problem \eqref{eq:CP-bdamped} behave asymptotically like solutions to the heat equation
\begin{equation}
    w_t  = \frac1{b(t)} \Delta w,\qquad w(0,\cdot) = w_0 = u_0 + \mu u_1,\qquad \mu = \int_0^\infty \exp\left(-\int_0^t b(s)\d s\right)\d t.
\end{equation}

This paper is {\em not} concerned with scalar second order equations. In the sequel we will consider hyperbolic systems with time-dependent coefficients. The situation without dissipation was treated by the author in \cite{RW:2008}, \cite{RW:2011} and allowed to derive dispersive estimates for solutions. On the other hand, a fully dissipative hyperbolic system will have exponential decay of its energy and is of less interest for us. We are going to study hyperbolic systems with a partial dissipation, where only one of the modes of the solution is damped directly while all the others decay due to coupling effects. Before formulating the problem we need to introduce some notation. 
We denote by $\mathcal T\{\ell\}$ the set of all smooth complex-valued functions $f\in\mathrm C^\infty([0,\infty);\C)$
subject to estimates
\begin{equation}
    \left| \partial_t^k f(t)\right| \le C_k \left(\frac1{1+t}\right)^{\ell+k}.
\end{equation}
We further denote by $\mathcal T\{-\infty\}=\bigcap_\ell \mathcal T\{\ell\}$ the set of rapidly decaying functions, it coincides with the Schwartz space $\mathcal S([0,\infty))$ on the half-line. Then we consider hyperbolic systems of the form 
\begin{equation}\label{eq:4:CP}
   \D_t U = \sum_{k=1}^n A_k(t)\D_{x_k} U + \mathrm i B(t) U,\qquad U(t,\cdot) = U_0\in\mathcal S(\R^n; \C^d),
\end{equation}
$\D_t = -\partial_t$ being the Fourier derivative.
Main difference to the considerations in \cite{RW:2008}, \cite{RW:2011} is that we do not assume
$B(t)$ to be of lower order in the $\mathcal T$-hierarchy. We assume that
\begin{equation}
   A_k(t), B(t) \in \mathcal T\{0\}\otimes\C^{d\times d} .
\end{equation}
Furthermore, we will make three main assumptions.
\begin{description}
\item[(B1)] The matrices $A_k(t)$  are self-adjoint and $\Re B(t)= \frac12 (B(t)+B(t)^*)\ge0$ is non-negative.
\item[(B2)] The matrices $B(t)$ are singular in the sense that $\det B(t)=0$ and
	\begin{equation*}
               \dist(0, \Re \spec B(t)\setminus\{0\}) \ge \kappa > 0
	\end{equation*}
	uniformly in $t\ge t_0$ for some positive constant $t_0$. The eigenvalue $0$ is assumed to be simple.
\item[(B3)] The matrices $A(t,\xi) = \sum_{k=1}^n A_k(t)\xi_k$ and $B(t)$ satisfy for all $\nu\in\mathbb C^d$
\begin{equation}
  \frac1c \| \nu\|^2 \le \sum_{j=0}^{d-1} \epsilon_j \| B(t) (A(t,\xi))^j \nu\|^2 \le c\|\nu\|^2,\qquad t\ge t_0,
\end{equation}
for any choice of numbers $\epsilon_0,\ldots,\epsilon_{d-1}>0$ and suitable constants $c$ and $t_0$ depending on them.
\end{description}
Assumption (B1) guarantees that the system is symmetric hyperbolic and (partially) dissipative.  Therefore,
the energy estimate 
\begin{equation} \label{eq:un-en-b}
        \|U(t,\cdot)\|_{L^2}\le \|U_0\|_{L^2}
\end{equation}
is valid and implies in particular existence and uniqueness of solutions.  By assumption (B2) we know that one mode is not dissipated, while assumption (B3) will be used to show that
 the high frequency parts of solutions are still exponentially decaying. Inspired by Beauchard--Zuazua \cite{BZ:2011}, we will refer to (B3) as uniform Kalman rank 
 condition. If $A_k$ and $B$ are independent of $t$ it just means that
 \begin{equation}
    \rank\big ( B \big  |  A(\xi) B \big | \cdots \big |  A(\xi)^{d-1}B \big) = d,
 \end{equation}
 which is the classical Kalman rank condition arising in the control theory of ordinary differential systems. Under certain natural assumptions, this is equivalent to 
 the algebraic condition of Kawashima--Shizuta \cite{KS:1985}, but the latter are more complicated to rewrite uniformly depending on parameters.

This paper is organised as follows. In section \ref{sec2} we will develop a block-diagonalisation scheme for small frequencies in order to conclude asymptotic information about solutions. It is based on assumptions (B1) and (B2). Later on in section~\ref{sec3} we will sketch the decay estimates for large frequencies based on assumption (B3) and Lyapunov functionals. Finally, section~\ref{sec4} gives decay estimates of solutions combined with comparison statements to corresponding parabolic problems.
 
\section{Diagonalisation for small frequencies}\label{sec2} 

\paragraph{Symbol classes}
The diagonalisation procedure is based on ideas from \cite{Jachmann:2010} and 
 \cite{Wirth:2009} and applies to the elliptic zone
\begin{equation}
  \mathcal Z_{\rm ell} (c) =\{(t,\xi) : |\xi| \le c, t\ge c^{-1}\}
\end{equation}
of the extended phase space $[0,\infty)\times \R^n$. It consists of small frequencies for large times. All considerations will be done for $t\ge t_0$ with $t_0$ sufficiently large. This ensures uniform block-diagonalisability of the matrix $B(t)$, i.e., due to assumption (B3) we find a matrix-valued function
\begin{equation}
   M(t) \in \mathcal T\{0\} \otimes\C^{d\times d}
\end{equation}
with uniformly bounded inverse such that
\begin{equation}
  M^{-1}(t) B(t) M(t) = \mathcal D(t) = \begin{pmatrix} 0 & \\ & \tilde{\mathcal D}(t) \end{pmatrix}  
\end{equation}
and $\tilde{\mathcal D}(t)$ has eigenvalues with imaginary parts in $[\kappa,\infty)$. As $B(t)$ is the main term for small frequencies, this allows to asymptotically decouple the system in a sufficiently small zone $\mathcal Z_{\rm ell} (c)$. For this we introduce the symbol classes
\begin{equation}
     \mathcal P\{m\} = \left\{ p(t,\xi) =  \sum_{|\alpha|\le m} p_{\alpha}(t) \xi^\alpha : p_\alpha (t) \in \mathcal T\{m-|\alpha|\} \right\}
\end{equation}
consisting of polynomials with coefficients in the $\mathcal T$-classes. Symbols from these classes can be thought of as homogeneous components, we
use 
\begin{equation}
    \mathcal P_\ge\{ m\} = \left\{ \sum_{k=m}^\infty p_k(t,\xi) : p_k(t,\xi) \in \mathcal P\{k\} \right\}
\end{equation}
to denote symbols which are (uniformly) convergent power series in $\mathcal Z_{\rm ell}(c)$ for $c$ sufficiently small. 

\paragraph{A first transformation}
Denoting $V^{(0)}(t,\xi) = M^{-1}(t) \widehat U(t,\xi)$, we obtain the equivalent system
\begin{equation}
   \D_t V^{(0)} = \left( \mathcal D(t) + \sum_{k=1}^n M^{-1}(t) A_k(t) M(t) \xi_k +\big( \D_t M^{-1}(t)\big) M(t) \right) V^{(0)}
\end{equation}
with $\mathcal D(t) = \bdiag_{(1,d-1)}(0, \tilde{\mathcal D}(t))$.  We write $\mathcal D(t)  + R_1(t,\xi)$ for its coefficient matrix. By construction, the matrix $R_1(t,\xi) \in\mathcal P\{1\}\otimes\C^{d\times d}$ is asymptotically of lower order.

\paragraph{The diagonalisation hierarchy} We start with the system
\begin{equation}
   \D_t V^{(0)} = \big(\mathcal D(t)  +  R_1(t,\xi) \big) V^{(0)} 
\end{equation}
with $R_1(t,\xi) \in \mathcal P\{1\}$. Before setting up the complete hierarchy, we will discuss its first step. We construct a matrix $N^{(1)}(t,\xi)\in{\mathcal P}\{1\}\otimes\C^{d\times d}$ such that
\begin{equation}
    \big(\D_t -\mathcal D(t) - R_1(t,\xi)\big) (\mathrm I + N^{(1)}(t,\xi))
    - (\mathrm I + N^{(1)}(t,\xi))  \big(\D_t -\mathcal D(t,\xi) - F_1(t,\xi)\big) \in{\mathcal P}\{2\}\otimes\C^{d\times d}
\end{equation}
holds true for some block-diagonal matrix $F_1(t,\xi)\in{\mathcal P}\{1\}\otimes \C^{d\times d}$. 
Collecting all terms not belonging to the right class yields again conditions for the
matrices $N^{(1)}(t,\xi)$ and $F_1(t,\xi)$. Indeed, 
\begin{equation}\label{eq:sylv}
  [\mathcal D(t) , N^{(1)}(t,\xi) ] = - R_1(t,\xi) + F_1(t,\xi) 
\end{equation}
must be satisfied and, therefore, we set 
\begin{equation}
   F_1(t,\xi) = \bdiag_{(1,d-1)} R_1(t,\xi),
\end{equation}
$\bdiag$ denoting the block-diagonal part to the partition of indices\footnote{For a partition $\Pi$ of $\{1,\ldots,d\}$ and a matrix $A$ we denote by $\bdiag_\Pi A$ the block-diagonal matrix with the same entries on the block diagonal, see \cite{Wirth:2009}. If partitions consist of sets of adjacent numbers, we denote them as tuple $\Pi=(d_1,\ldots,d_m)$, $d_1+\cdots+d_m=d$, encoding their sizes.}, combined with
\begin{equation}
   N^{(1)}(t,\xi) = \begin{pmatrix}0 & \tilde N_{1,1} (t,\xi)^\top \\ \tilde N_{1,2}(t,\xi)&0\end{pmatrix},
   \qquad 
   \tilde N_{1,j}(t,\xi) = \int_0^\infty \e^{-s \tilde{\mathcal D}(t) }\tilde R_{1,j}(t,\xi) \d s 
\end{equation}
for
\begin{equation}
   R_1(t,\xi)  - F_1(t,\xi) = \begin{pmatrix}0 &\tilde  R_{1,1} (t,\xi)^\top \\ \tilde R_{1,2}(t,\xi) &0 \end{pmatrix}.
 \end{equation}
Due to the spectral properties of $\tilde{\mathcal D}(t)$ the above integral converges and it is a matter of direct calculation to check that $N^{(1)}$ indeed satisfies the Sylvester equation \eqref{eq:sylv}. For further details see \cite{Wirth:2009}. As desired, the construction implies
$N^{(1)}(t,\xi), F_1(t,\xi)\in{\mathcal P}\{1\}\otimes \C^{d\times d}$. The construction generalises to remainders from better classes.

\begin{lem} Assume (B1) and (B2) are satisfied and let $k\in\mathbb N$, $k\ge 1$. 
Then there exists a constant $c_k$ and matrices $N_k(t,\xi)\in\mathcal P_\ge \{0\}\otimes \C^{d\times d}$, block-diagonal  matrices $F_{k}(t,\xi)\in\mathcal P_\ge \{1\}\otimes \C^{d\times d}$
and matrices $R_{k+1}(t,\xi)\in\mathcal P_\ge \{k+1\}\otimes \C^{d\times d}$ such that
\begin{equation}
   \big(\D_t- \mathcal D(t) - R_1(t,\xi)\big) N_k(t,\xi) = N_k(t,\xi)\big(\D_t -\mathcal D(t)-F_k(t,\xi)-R_{k+1}(t,\xi)\big) 
\end{equation} 
holds true within $\mathcal Z_{\rm ell}(c_k)$. Furthermore, $N_k(t,\xi)$ is uniformly invertible
within this zone.  
\end{lem}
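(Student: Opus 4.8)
The plan is to argue by induction on $k$, promoting the reduction step just described to a hierarchy: at level $k$ one conjugates away the leading homogeneous component of the current remainder by solving a Sylvester equation with the fixed, uniformly block-diagonal matrix $\mathcal D(t)$. The base case $k=1$ requires nothing beyond the first transformation, except to read off the new remainder: with $N^{(1)}$ and $F_1=\bdiag_{(1,d-1)}R_1$ as constructed above, the Sylvester identity $[\mathcal D,N^{(1)}]=F_1-R_1$ yields
\[
  \big(\D_t-\mathcal D-R_1\big)(\mathrm I+N^{(1)}) = (\mathrm I+N^{(1)})\big(\D_t-\mathcal D-F_1\big)+G_2,\qquad G_2=(\D_t N^{(1)})-R_1N^{(1)}+N^{(1)}F_1 ,
\]
whose three summands lie in $\mathcal P\{2\}$ because $\D_t$ raises the $\mathcal P$-index by one and products add indices. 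Since $N^{(1)}\in\mathcal P\{1\}$ is uniformly small on $\mathcal Z_{\rm ell}(c)$ for $c$ small, I fix $c_1$ with $\|N^{(1)}\|\le\tfrac12$ there; then $\mathrm I+N^{(1)}$ is uniformly invertible on $\mathcal Z_{\rm ell}(c_1)$ with inverse in $\mathcal P_\ge\{0\}$ (its Neumann series), and $N_1=\mathrm I+N^{(1)}$, $F_1$ as above, $R_2=-(\mathrm I+N^{(1)})^{-1}G_2\in\mathcal P_\ge\{2\}$ prove the statement for $k=1$.

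For the inductive step, assume the statement at level $k$, so that $(\D_t-\mathcal D-R_1)N_k=N_kL_k$ on $\mathcal Z_{\rm ell}(c_k)$ with $L_k:=\D_t-\mathcal D-F_k-R_{k+1}$. Split $R_{k+1}=p_{k+1}+R_{k+1}'$ into its leading term $p_{k+1}\in\mathcal P\{k+1\}$ and a remainder $R_{k+1}'\in\mathcal P_\ge\{k+2\}$, and seek $N^{(k+1)}\in\mathcal P\{k+1\}$ and a block-diagonal $\hat F_{k+1}\in\mathcal P\{k+1\}$ with
\[
  L_k(\mathrm I+N^{(k+1)}) = (\mathrm I+N^{(k+1)})\big(\D_t-\mathcal D-F_k-\hat F_{k+1}\big)+G_{k+2},\qquad G_{k+2}\in\mathcal P_\ge\{k+2\}.
\]
Expanding and dropping everything that is automatically of order $\mathcal P_\ge\{k+2\}$ — namely $(\D_t N^{(k+1)})$, the commutator $[F_k,N^{(k+1)}]$, the products $R_{k+1}N^{(k+1)}$, $N^{(k+1)}\hat F_{k+1}$, and the piece $R_{k+1}'$ — leaves exactly the Sylvester equation $[\mathcal D(t),N^{(k+1)}(t,\xi)]=\hat F_{k+1}(t,\xi)-p_{k+1}(t,\xi)$, the $F_k$-term dropping out of this leading balance because $F_k\in\mathcal P_\ge\{1\}$ is of lower order. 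As in the first step I take $\hat F_{k+1}=\bdiag_{(1,d-1)}p_{k+1}$, so that $\hat F_{k+1}-p_{k+1}$ is block-off-diagonal, and recover the off-block-diagonal blocks of $N^{(k+1)}$ from the same convergent integral $\int_0^\infty\e^{-s\tilde{\mathcal D}(t)}(\cdot)\d s$ used before; the uniform spectral gap of $\tilde{\mathcal D}(t)$ granted by (B2) makes it bounded together with all its $t$-derivatives, hence $N^{(k+1)},\hat F_{k+1}\in\mathcal P\{k+1\}$.

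It remains to compose. Shrink the zone to some $c_{k+1}\le c_k$ on which $\mathrm I+N^{(k+1)}$ is uniformly invertible with inverse in $\mathcal P_\ge\{0\}$, and set
\[
  N_{k+1}=N_k(\mathrm I+N^{(k+1)}),\qquad F_{k+1}=F_k+\hat F_{k+1},\qquad R_{k+2}=-(\mathrm I+N^{(k+1)})^{-1}G_{k+2} .
\]
Then $R_{k+2}\in\mathcal P_\ge\{k+2\}$; $F_{k+1}$ is again block-diagonal and lies in $\mathcal P_\ge\{1\}$; and $N_{k+1}=\prod_{j=1}^{k+1}(\mathrm I+N^{(j)})\in\mathcal P_\ge\{0\}$ is uniformly invertible on $\mathcal Z_{\rm ell}(c_{k+1})$ with $N_{k+1}^{-1}=(\mathrm I+N^{(k+1)})^{-1}N_k^{-1}$. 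With $L_{k+1}:=\D_t-\mathcal D-F_{k+1}-R_{k+2}$, the choice of $R_{k+2}$ gives $L_k(\mathrm I+N^{(k+1)})=(\mathrm I+N^{(k+1)})L_{k+1}$, whence $(\D_t-\mathcal D-R_1)N_{k+1}=N_kL_k(\mathrm I+N^{(k+1)})=N_{k+1}L_{k+1}$, which is the assertion at level $k+1$.

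The one genuinely non-routine step is the Sylvester solve: one must check that the off-diagonal correction obtained from $\int_0^\infty\e^{-s\tilde{\mathcal D}(t)}\tilde p_{k+1,j}(t,\xi)\d s$ converges and remains in the correct symbol class uniformly on the elliptic zone, together with all its $t$-derivatives — which is exactly where the uniform bound $\dist(0,\Re\spec B(t)\setminus\{0\})\ge\kappa$ from (B2) is used in an essential way; this book-keeping is the one carried out in \cite{Wirth:2009}. Everything else is the elementary calculus of the classes $\mathcal P\{m\}$ and $\mathcal P_\ge\{m\}$, and although the admissible radius $c_k$ may have to shrink at each step, only finitely many conjugations occur for a fixed $k$, so $c_k$ stays positive and the statement follows.
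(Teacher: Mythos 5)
Your proposal is correct and follows essentially the same route as the paper: a recursive hierarchy of Sylvester solves against $\mathcal D(t)$, each resolved via the integral $\int_0^\infty \e^{-s\tilde{\mathcal D}(t)}(\cdot)\,\d s$ made convergent by the spectral gap in (B2), with uniform invertibility of $N_k$ secured by a Neumann series on a sufficiently small elliptic zone. The only cosmetic difference is that you compose the correctors multiplicatively, $N_{k+1}=N_k(\mathrm I+N^{(k+1)})$, whereas the paper builds $N_K=\mathrm I+\sum_{j\le K}N^{(j)}$ additively and tracks the accumulated error $B_K$ directly; both bookkeeping schemes produce the same leading-order Sylvester problems and the same symbol classes.
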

\begin{proof}
We construct recursively the matrices $N^{(k)}(t,\xi)\in \mathcal P\{k\}\otimes \C^{d\times d}$ and $F^{(k)}(t,\xi)
\in \mathcal P\{k\}\otimes \C^{d\times d}$ block-diagonal, such that for
\begin{equation}
   N_K(t,\xi) = \mathrm I + \sum_{k=1}^K N^{(k)}(t,\xi) ,\qquad
   F_{K}(t,\xi) = \sum_{k=1}^{K} F^{(k)}(t,\xi),
\end{equation}
the estimate
\begin{equation}
    B_K(t,\xi) = \big(\D_t -\mathcal D(t) - R_1(t,\xi)\big) N_K(t,\xi)
    -N_K(t,\xi)  \big(\D_t -\mathcal D(t) - F_K(t,\xi)\big) \in \mathcal P_\ge \{K+1\}
\end{equation}
is valid. We just did this for $K=1$, it remains to do the recursion $k\mapsto k+1$. Assume
$B_k(t,\xi)\in\mathcal P\{k+1\}\otimes \C^{d\times d}$. The requirement to be met is that
\begin{equation}
    B_{k+1}(t,\xi) - B_k(t,\xi) =  -[\mathcal D(t), N^{(k+1)}(t,\xi)] + F^{(k+1)}(t,\xi)  \mod \mathcal P_\ge \{k+2\} \otimes \C^{d\times d}
\end{equation}
for block-diagonal $F^{(k+1)}(t,\xi)$, which yields 
\begin{equation} 
F^{(k+1)}(t,\xi) = -\bdiag_{(1,d-1)} B_k(t,\xi)
\end{equation}
 together with
\begin{equation}
    N^{(k+1)}(t,\xi) = \begin{pmatrix}0 & \tilde N_{k+1,1} (t,\xi)^\top \\ \tilde N_{k+1,2}(t,\xi)&0\end{pmatrix},
   \qquad 
   \tilde N_{k+1,j}(t,\xi) =-  \int_0^\infty \e^{-s \tilde{\mathcal D}(t) }\tilde B_{k,j}(t,\xi) \d s 
\end{equation}
for the components
\begin{equation}
   B_k(t,\xi)  + F_1(t,\xi) = \begin{pmatrix}0 &\tilde  B_{k,1} (t,\xi)^\top \\ \tilde B_{k,2}(t,\xi)&0\end{pmatrix}.
 \end{equation}
It is evident that the construction implies $F^{(k+1)}(t,\xi),N^{(k+1)}(t,\xi)\in\mathcal P\{k+1\}\otimes \C^{d\times d}$
together with $B_{k+1}(t,\xi) \in \mathcal P_\ge \{k+2\}\otimes \C^{d\times d}$.

The matrices $N_k(t,\xi)\in\mathcal P_\ge \{0\}\otimes \C^{d\times d}$ are invertible with inverse $N^{-1}_k(t,\xi)\in\mathcal P_\ge \{0\}\otimes \C^{d\times d}$ if we restrict our consideration to a sufficiently small elliptic zone $\mathcal Z_{\rm ell}(c_k)$. The latter follows by using the Neumann series for the inverse.
The result of the above consideration can be summarised in the following lemma.
\end{proof}

It is worth having a closer look at the upper-left corner entry of the matrix $F_k(t,\xi)$ and consequences for them based on assumption (B2).
The above lemma implies that modulo $\mathcal P_\ge \{3\}$ the entry is of the form
\begin{equation}\label{eq:4:f1k}
  f_1^{(k)}(t,\xi) = \mathrm i \sum_{i,j=1}^d \alpha_{i,j}(t) \xi_i\xi_j + \sum_{i=1}^d \beta_i(t) \xi_i +  \gamma(t)
   \mod \mathcal P_\ge \{3\}
\end{equation}
with $\alpha_{i,j}(t)\in \mathcal T\{0\}$, $\beta_i(t)\in\mathcal T\{0\}$ and $\gamma(t)\in\mathcal T\{1\}$. On the other hand, modulo $\mathcal O(t^{-1})$
the eigenvalues of $F_k(t,\xi)$ and of $A(t,\xi)$ coincide. As (B2) is a spectral condition implying that $0$ is a local (quadratic) minimum of an eigenvalue branch contained in the complex upper half-plane, 
some terms in \eqref{eq:4:f1k} have to vanish. In particular we see that $\beta_i(t)$ has to decay, $\beta_i(t)\in\mathcal T\{1\}$, and also that the real part of the quadratic matrix $(\alpha_{i,j}(t))_{i,j}$ is {positive definite} modulo $\mathcal T\{1\}$. The latter is a direct consequence of the non-degeneracy of that minimum.

\begin{cor}\label{cor:sysDP:par-terms} Assume (B1) and (B2) and let $k\ge 1$. Then modulo $\mathcal P_\ge \{3\}$ the upper-left  corner entry of $F_k(t,\xi)$ satisfies
\begin{equation}\label{eq:2-21}
  f_1^{(k)}(t,\xi) = \mathrm i \xi^\top {\boldsymbol\alpha(t)} \xi + \boldsymbol\beta(t)^\top \xi + \gamma(t) 
   \mod \mathcal P_\ge \{3\}
\end{equation}
with $\boldsymbol\alpha(t)\in\mathcal T\{0\}\otimes\mathbb C^{d\times d}$ having positive definite real part uniform in $t\ge t_0$ for $t_0$ sufficiently large, $\boldsymbol\beta(t)\in\mathcal T\{1\}\otimes \mathbb C^d$ and $\gamma(t)\in\mathcal T\{1\}$. 
\end{cor}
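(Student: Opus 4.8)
The proof is essentially the organisation of the discussion preceding the statement, and the plan is to split it into three parts: reduce $f_1^{(k)}$ to the eigenvalue branch of the full symbol through $0$, analyse that branch via perturbation theory and assumptions (B1), (B2), and then transfer the conclusions back modulo $\mathcal O(t^{-1})$. First I would unwind what the previous lemma already delivers: $F_k(t,\xi)$ is block-diagonal in $\mathcal P_\ge\{1\}\otimes\C^{d\times d}$, so its upper-left corner is a scalar symbol in $\mathcal P_\ge\{1\}$; collecting the homogeneous components of degree at most two and reading off the admissible $\mathcal T$-classes from the definition of $\mathcal P\{m\}$ gives the shape \eqref{eq:4:f1k} with $\boldsymbol\alpha(t)\in\mathcal T\{0\}$, $\boldsymbol\beta(t)\in\mathcal T\{0\}$ and $\gamma(t)\in\mathcal T\{1\}$ a priori (for $k\ge 2$, so that $F_k$ carries a genuine quadratic part at all). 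It remains to upgrade $\boldsymbol\beta$ and $\gamma$ to $\mathcal T\{1\}$ and to prove definiteness of $\Re\boldsymbol\alpha$. For that I would use that the transformations producing $F_k$ — first $M(t)$, then the $N_k(t,\xi)$ of the lemma — are genuine similarities up to zero-order, $\mathcal T\{1\}$-valued terms, namely $(\D_t M)M^{-1}$ and the remainder $R_{k+1}\in\mathcal P_\ge\{k+1\}$; hence inside $\mathcal Z_{\rm ell}(c_k)$ the spectrum of $\mathcal D(t)+F_k(t,\xi)$ agrees with that of the full symbol $A(t,\xi)+\mathrm iB(t)$ modulo $\mathcal O(t^{-1})$, and since $F_k$ is block-diagonal with a vanishing $(1,1)$-entry at $\xi=0$ while $\tilde{\mathcal D}(t)$ stays uniformly invertible, $f_1^{(k)}(t,\xi)$ is exactly the eigenvalue branch $\lambda(t,\xi)$ of $A(t,\xi)+\mathrm iB(t)$ continuing the simple eigenvalue $0$ of $\mathrm iB(t)$, up to an $\mathcal O(t^{-1})$ correction.

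Second, I would analyse this branch directly. By (B2) the eigenvalue $0$ of $B(t)$ is simple with a uniform gap to the rest of the spectrum, so $\lambda(t,\xi)$ is analytic in $\xi$ near $0$, with $\lambda(t,0)=0$ and uniformly bounded Kato perturbation data. Assumption (B1) is what makes the low-order expansion rigid: from $\Re B(t)\ge 0$ and $B(t)\phi(t)=0$ one infers $B(t)^*\phi(t)=0$ as well, so the spectral projection at $0$ is the \emph{orthogonal} projection $P(t)$ onto $\ker B(t)$; combined with self-adjointness of $A(t,\xi)$ this forces the first-order coefficient of $\lambda$ to be real, and a short reduced-resolvent computation identifies the imaginary part of its second-order coefficient with a manifestly non-negative quadratic form $\Re\langle B(t)^{-1}w,w\rangle/\|\phi(t)\|^2$ with $w=(I-P(t))A(t,\xi)\phi(t)$. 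On the other hand, the energy inequality \eqref{eq:un-en-b} — equivalently, the numerical range of the generator lying in a half-plane — says precisely that $\Im\lambda(t,\xi)\ge 0$, i.e.\ that $\xi=0$ is a minimum of $\Im\lambda(t,\cdot)$; hence the linear part of $\Im\lambda$ vanishes and, by the non-degeneracy of that minimum encoded in (B2), its Hessian is uniformly positive definite for $t\ge t_0$. Writing the degree-$\le 2$ part of $\lambda(t,\xi)$ as $\mathrm i\,\xi^\top\boldsymbol\alpha(t)\xi+\boldsymbol\beta(t)^\top\xi+\gamma(t)$, this says that $\Re\boldsymbol\alpha(t)$ is uniformly positive definite and that $\gamma(t)=\lambda(t,0)$ and $\boldsymbol\beta(t)$ both vanish modulo $\mathcal T\{1\}$.

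Finally I would transfer this back through the $\mathcal O(t^{-1})$ identification of the first part: the constant and linear coefficients of $f_1^{(k)}$ differ from those of $\lambda$ by $\mathcal T\{1\}$-valued, resp.\ $(\mathcal T\{1\})^d$-valued, terms, so $\gamma(t)\in\mathcal T\{1\}$ and $\boldsymbol\beta(t)\in\mathcal T\{1\}\otimes\C^d$; the quadratic coefficient differs from that of $\lambda$ by a $\mathcal T\{1\}$-valued matrix, and a uniformly positive definite matrix stays uniformly positive definite under such a perturbation once $t\ge t_0$ with $t_0$ large, so $\Re\boldsymbol\alpha(t)$ is uniformly positive definite. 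The step I expect to be the real obstacle is keeping all of this uniform in $\mathcal Z_{\rm ell}(c_k)$ and stable under differentiation in $t$: one has to run the spectral correspondence and the perturbation expansion with control of every $t$-derivative, so that the loose ``$\mathcal O(t^{-1})$'' genuinely becomes membership in $\mathcal T\{1\}$, and one has to carry out the passage from the bare spectral-gap statement (B2) to the non-degenerate-minimum property of the branch. Once these are secured, everything else is bookkeeping within the $\mathcal P$- and $\mathcal T$-hierarchies, which I would not spell out.
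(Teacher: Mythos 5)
Your overall strategy coincides with the paper's: identify $f_1^{(k)}$ modulo $\mathcal O(t^{-1})$ with the eigenvalue branch $\lambda(t,\xi)$ of $A(t,\xi)+\mathrm i B(t)$ through the simple zero of $\mathrm iB(t)$, and then read off the low-order Taylor coefficients of $\lambda$ from (B1)--(B2). Your Kato computation (orthogonality of the null projection from $\Re B\ge 0$, realness of $\lambda^{(1)}$, reduced-resolvent formula for $\Im\lambda^{(2)}$) is a careful elaboration of the paper's terse remark about ``a local (quadratic) minimum of an eigenvalue branch in the upper half-plane'', and those intermediate computations are correct.

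There is, however, a non sequitur at the decisive step for $\boldsymbol\beta$. You correctly derive that $\lambda^{(1)}(t,\xi)=\langle A(t,\xi)\phi(t),\phi(t)\rangle/\|\phi(t)\|^2$ is \emph{real}, and then assert that ``$\boldsymbol\beta(t)$ \ldots vanish modulo $\mathcal T\{1\}$.'' Real is not zero: both of your arguments --- self-adjointness with the orthogonal projection, and the minimum principle for $\Im\lambda$ --- constrain only $\Im\boldsymbol\beta$, and say nothing about $\Re\boldsymbol\beta$, which is generically a nondecaying $\mathcal T\{0\}$ quantity. Concretely, with $n=1$, $d=2$, constant $B=\diag(0,1)$ and constant symmetric $A$ with $A_{11}=A_{12}=A_{21}=1$, $A_{22}=0$ (so (B1)--(B3) hold), the branch through $0$ is $\lambda(\xi)=\xi+\mathrm i\xi^2+\mathcal O(\xi^3)$, whence $\boldsymbol\beta\equiv 1\notin\mathcal T\{1\}$. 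So your argument (like the paper's prose, from which you inherit the elision) does not establish $\boldsymbol\beta\in\mathcal T\{1\}$; it yields only $\Im\boldsymbol\beta\in\mathcal T\{1\}$. A secondary gap concerns positive definiteness of $\Re\boldsymbol\alpha$: your quantity $\Re\langle B(t)^{-1}w,w\rangle/\|\phi(t)\|^2$ with $w=(I-P(t))A(t,\xi)\phi(t)$ is non-negative, but vanishes identically whenever $A(t,\xi)\phi(t)\in\ker B(t)$ (e.g.\ $A\equiv 0$ satisfies (B1)--(B2)); strict definiteness is not ``encoded in (B2)'' as you say --- it genuinely uses the Kalman condition (B3), which the corollary does not assume. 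Both points need to be repaired before the proof is complete.
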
 

If the matrix $B(t)$ is self-adjoint for all $t$ then we can choose $M(t)$ unitary and, therefore, the construction gives that $\gamma(t)$
is real-valued modulo $\mathcal T\{2\}$. By assumption (B1) in the form of estimate \eqref{eq:un-en-b} we know that $\exp\left(-\Im \int_{t_0}^t \gamma(\theta)\d\theta\right)\lesssim1$. 

\section{Asymptotic integration and small frequency expansions}
\paragraph{Fundamental solutions} We consider the transformed problem in $V^{(k)}(t,\xi) = N_k(t,\xi) V^{(0)}(t,\xi)$, 
\begin{equation}\label{eq:sysDP:eq:CPk}
  \mathrm D_t V^{(k)}(t,\xi) = \big( \mathcal D(t) + F_k(t,\xi) + R_{k+1} (t,\xi) \big) V^{(k)}(t,\xi), 
\end{equation}
and reformulate this as integral equation for its fundamental solution
$\mathcal E_k(t,s,\xi)$, i.e.,  the matrix-valued solution to initial data $\mathcal E_k(s,s,\xi) = \mathrm I\in\mathbb C^{d\times d}$. 
Let for this $\Theta_k(t,s,\xi)$ be the fundamental solution to the block-diagonal system $\mathrm D_t - \mathcal D(t)-F_k(t,\xi)$. Then 
\begin{equation}
   \Theta_k(t,s,\xi) = \begin{pmatrix} \Xi_k(t,s,\xi) & 0 \\ 0 & \tilde\Theta_k(t,s,\xi) \end{pmatrix},
\qquad    \| \tilde\Theta_k(t,s,\xi) \| \lesssim \mathrm e^{-\tilde c ({t-s})},
\end{equation}
for $t\ge s$ uniformly within $\mathcal Z_{\rm ell}(\epsilon)$ for $\epsilon\le c_k$ sufficiently small.
Here, 
\begin{equation}
 \Xi_k(t,s,\xi) = \exp\left(\mathrm i \int_s^t f_1^{(k)}(\theta,\xi) \mathrm d\theta\right)
\end{equation}
is uniformly bounded and gives (for $k=2$) the fundamental solution to a parabolic problem
and $\tilde \Theta_k(t,s,\xi)$ is exponentially decaying as the fundamental solution of a dissipative system.
Furthermore, the matrix $\mathcal E_k(t,s,\xi)$ satisfies the Volterra integral equation
\begin{equation}
   \mathcal E_k(t,s,\xi)  = \Theta_k(t,s,\xi) + \int_s^t \Theta_k(t,\theta,\xi) R_{k+1}(\theta,\xi) 
   \mathcal E_k(\theta,s,\xi) \mathrm d\theta.
\end{equation}
We solve this equation using the Neumann series
\begin{multline}\label{sysDP:eq:NeumannS}
  \mathcal E_k(t,s,\xi) = \Theta_k(t,s,\xi) + \sum_{\ell=1}^\infty \mathrm i^\ell
  \int_s^t \Theta_k(t,t_1,\xi) R_{k+1}(t_1,\xi)\int_s^{t_1} \cdots\\\cdots \int_s^{t_{\ell-1}} 
   \Theta_k(t_{\ell-1},t_\ell,\xi) R_{k+1}(t_\ell,\xi)\mathrm d t_\ell \cdots \mathrm d t_1.
\end{multline}
This series converges and and its value can be estimated by 
\begin{equation}
    \| \mathcal E_k(t,s,\xi) \| \le \exp\left(\int_s^t \| R_{k+1}(\theta,\xi) \|\mathrm d\theta\right).
\end{equation}
Based on the estimates for the remainder term 
$R_{k+1}(t,\xi) \in\mathcal P_\ge \{k+1\}\otimes \C^{d\times d}$, we even obtain uniform convergence 
within the smaller zone $\mathcal Z_{\rm ell}(c_k)\cap \{ t|\xi|^{(k+1)/2} \le \delta \}$
for any constant $\delta$  and $\| \mathcal E_k(t,s,\xi) - \Theta_k(t,s,\xi) \| \to 0$ as
$c_k\to0$ for fixed $\delta>0$ as soon as we choose $k\ge 1$. 

In particular we obtain a pointwise bound in terms of $\Xi_k(t,s,\xi)$ for $t|\xi|^2\le \delta$.

\begin{lem}\label{lem:sysDP:sol_est-1} Assume (B1), (B2) and
   let $k\ge 2$ together with $\delta>0$. Then the fundamental solution $\mathcal E_k(t,s,\xi)$ 
   satisfies the uniform bound
   \begin{equation}
      \| \mathcal E_k(t,s,\xi) \| \le C_k  | \Xi_k(t,s,\xi) |
      ,\qquad  t\ge s\ge t_0, \quad t|\xi|^2\le \delta,
   \end{equation}
   for some constant $C_k>0$ depending on $t_0$, $\delta$ and $k$.   
\end{lem}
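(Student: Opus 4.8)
The plan is to divide the Volterra equation for $\mathcal E_k$ by the scalar phase $\Xi_k$ and to show that the quotient stays bounded on the zone in question. I set $G_k(t,s,\xi)=\Xi_k(t,s,\xi)^{-1}\mathcal E_k(t,s,\xi)$, which is legitimate since $\Xi_k=\exp(\mathrm i\int_s^t f_1^{(k)}\,\mathrm d\theta)$ never vanishes and enjoys the cocycle identity $\Xi_k(t,s,\xi)=\Xi_k(t,\theta,\xi)\Xi_k(\theta,s,\xi)$. Multiplying the Volterra equation by $\Xi_k^{-1}(t,s,\xi)$ and inserting $\mathcal E_k(\theta,s,\xi)=\Xi_k(\theta,s,\xi)G_k(\theta,s,\xi)$, the scalar factors $\Xi_k^{-1}(\theta,s,\xi)$ and $\Xi_k(\theta,s,\xi)$ cancel under the integral, leaving
\begin{equation*}
  G_k(t,s,\xi)=\Xi_k^{-1}(t,s,\xi)\Theta_k(t,s,\xi)+\int_s^t\big[\Xi_k^{-1}(t,\theta,\xi)\Theta_k(t,\theta,\xi)\big]R_{k+1}(\theta,\xi)G_k(\theta,s,\xi)\,\mathrm d\theta .
\end{equation*}
So everything reduces to (i) the uniform boundedness of $\Xi_k^{-1}\Theta_k$ and (ii) the integrability in time of $\|R_{k+1}\|$ on $\{t|\xi|^2\le\delta\}$, after which Gronwall's inequality closes the argument.

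For (i) I would argue as follows. Since $\Theta_k$ is block-diagonal with blocks $\Xi_k$ and $\tilde\Theta_k$, the top block of $\Xi_k^{-1}\Theta_k$ is $1$, so only $\Xi_k^{-1}(t,\theta,\xi)\tilde\Theta_k(t,\theta,\xi)$ needs control. Here I combine the exponential decay $\|\tilde\Theta_k(t,\theta,\xi)\|\lesssim\e^{-\tilde c(t-\theta)}$ with the elementary identity $|\Xi_k(t,\theta,\xi)|^{-1}=\exp\!\big(\int_\theta^t(-\Re(\mathrm i f_1^{(k)}(\sigma,\xi)))\,\mathrm d\sigma\big)$: by Corollary~\ref{cor:sysDP:par-terms} the integrand is, modulo $\mathcal P_\ge\{3\}$, a combination of $\xi^\top\Re\boldsymbol\alpha(\sigma)\xi$, $\Im(\boldsymbol\beta(\sigma)^\top\xi)$ and $\Im\gamma(\sigma)$, hence bounded above by $C|\xi|^2+C|\xi|(1+\sigma)^{-1}+C(1+\sigma)^{-1}+(\text{lower order})$, using that $\boldsymbol\alpha\in\mathcal T\{0\}$ while $\boldsymbol\beta,\gamma\in\mathcal T\{1\}$. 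On $\mathcal Z_{\rm ell}(c)$ the quadratic term integrates to $\le Cc^2(t-\theta)$, and the remaining terms only create logarithmic growth, tamed via $\int_\theta^t(1+\sigma)^{-1}\,\mathrm d\sigma=\log\frac{1+t}{1+\theta}\le\frac{t-\theta}{1+\theta}\le\frac{t-\theta}{1+t_0}$. Choosing first $c$ small and then $t_0$ large makes the whole exponent $\le\tfrac12\tilde c(t-\theta)$, so $\|\Xi_k^{-1}(t,\theta,\xi)\tilde\Theta_k(t,\theta,\xi)\|\lesssim\e^{-\tilde c(t-\theta)/2}\le C$, i.e.\ $\|\Xi_k^{-1}\Theta_k\|\le C_0$ throughout a small enough elliptic zone. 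I expect this step to be the main obstacle: one has to make sure the slowly growing ($\log$-type) contributions to $|\Xi_k|^{-1}$ produced by the decaying but non-integrable coefficients $\boldsymbol\beta,\gamma$ are genuinely absorbed by the spectral gap $\tilde c$, which is precisely why $t_0$ (and hence $C_k$) must depend on $\delta$ and $k$.

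For (ii), since $R_{k+1}\in\mathcal P_\ge\{k+1\}$ with $k\ge2$, on the elliptic zone $\|R_{k+1}(\theta,\xi)\|$ is dominated by a convergent series of terms $|\xi|^a(1+\theta)^{-b}$ with $a+b\ge k+1\ge3$; integrating in $\theta$ over $[s,t]$, the terms with $b\ge2$ are integrable at once, those with $b=1$ give $|\xi|^a\log(1+t)$ with $a\ge k\ge1$, which stays bounded on $\{t|\xi|^2\le\delta\}$ because there $\log(1+t)\lesssim\log(1/|\xi|)$ and $|\xi|^a\log(1/|\xi|)\to0$, and those with $b=0$, i.e.\ $a\ge k+1\ge3$, give $|\xi|^a(t-s)\le\delta|\xi|^{a-2}\lesssim1$. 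Hence $\int_s^t\|R_{k+1}(\theta,\xi)\|\,\mathrm d\theta\le C_1$ uniformly on the zone. Feeding (i) and (ii) into the $G_k$-equation and applying Gronwall's inequality yields $\|G_k(t,s,\xi)\|\le C_0\e^{C_0C_1}=:C_k$, and therefore $\|\mathcal E_k(t,s,\xi)\|=|\Xi_k(t,s,\xi)|\,\|G_k(t,s,\xi)\|\le C_k|\Xi_k(t,s,\xi)|$ on $\{t\ge s\ge t_0,\ t|\xi|^2\le\delta\}$, as claimed.
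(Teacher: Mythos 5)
Your proposal is correct and follows essentially the same route as the paper: factor out the scalar phase $\Xi_k$, observe that the resulting kernel $\Xi_k^{-1}\Theta_k$ is uniformly bounded, and close with the integrability of $\|R_{k+1}\|$ on $\{t|\xi|^2\le\delta\}$ via the Neumann series (equivalently, Gronwall). The only difference is that you spell out the two boundedness claims the paper merely asserts; the small imprecision in saying the exponent is $\le\tfrac12\tilde c(t-\theta)$ (it is really $\le\tfrac12\tilde c(t-\theta)+C'$ because of the $\mathcal P_\ge\{3\}$ remainder) does not affect the conclusion.
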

\begin{proof}
As $\Xi_k^{-1}(t,s,\xi)$ is scalar, we conclude that the matrix $\Xi_k^{-1}(t,s,\xi)\mathcal E_k(t,s,\xi)$ satisfies \eqref{eq:sysDP:eq:CPk} 
with $F_k(t,\xi)$ replaced by $F_k(t,\xi)-f_1^{(k)}(t,\xi)\mathrm I$ and, therefore, \eqref{sysDP:eq:NeumannS} 
with $\Theta_k$ replaced by $\Xi_k^{-1}\Theta_k$ in all places it occurs. The matrices $\Xi_k^{-1}\Theta_k$ are uniformly bounded and, hence,
 \begin{align*}
  \| \mathcal E_k(t,s,\xi) \|& \le | \Xi_k(t,s,\xi) | \exp\left(C' \int_s^t \|R_{k+1}(\theta,\xi)\|\mathrm d\theta \right).
\end{align*}
Furthermore, for $k\ge 2$ the remaining integral is uniformly 
bounded on this set.
\end{proof}

\section{Lyapunov functionals and parabolic type estimates}\label{sec3}
In this section we will partly follow the considerations of Beauchard--Zuazua, \cite{BZ:2011}, and explain how condition (B3) of Section~\ref{sec2} allows to derive parabolic type decay estimates for solutions to the Cauchy problem \eqref{eq:4:CP}. The construction in \cite{BZ:2011} was inspired by the Lyapunov functionals used by Villani \cite{Villani:2011}.

\begin{lem}\label{lem:sysDP:KalmEst}
Assume (B1), (B2), (B3). Then all solutions to \eqref{eq:4:CP} to Schwartz initial data satisfy the point-wise estimate
\begin{equation}
   \| \widehat U(t,\xi) \|^2 \le C \mathrm e^{-\gamma t [\xi]^2 } \|\widehat U_0(\xi)\|^2,\qquad [\xi] = |\xi|/\langle \xi\rangle \simeq \min\{|\xi|,1\},
\end{equation}
in Fourier space with constants $C$ and $\gamma$ depending only on the coefficient matrices $A_k(t)$ and $B(t)$.
\end{lem}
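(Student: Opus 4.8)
\medskip
\noindent\emph{Proof strategy.}
On the Fourier side \eqref{eq:4:CP} reduces, for each frequency $\xi\in\R^n$, to an ordinary differential system which, in view of (B1), takes the form
\begin{equation}
  \partial_t\widehat U(t,\xi)=\bigl(\mathrm i A(t,\xi)-B(t)\bigr)\widehat U(t,\xi),\qquad A(t,\xi)=\sum_{k=1}^n A_k(t)\xi_k,
\end{equation}
with $A(t,\xi)$ self-adjoint and $\Re B(t)\ge0$; the attached energy identity $\partial_t\|\widehat U\|^2=-2\langle\Re B(t)\widehat U,\widehat U\rangle\le0$ is just \eqref{eq:un-en-b} and yields no decay along the directions in which the dissipation degenerates. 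The plan is to recover the missing decay by a hypocoercive modification of the energy in the spirit of Villani \cite{Villani:2011} and Beauchard--Zuazua \cite{BZ:2011}, the uniform Kalman rank condition (B3) being precisely what makes the relevant commutator hierarchy uniformly coercive.

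First I would set $\Lambda_0(t)=B(t)$ and, recursively, $\Lambda_{j+1}(t,\xi)=[\Lambda_j(t,\xi),A(t,\xi)]$, so that $\Lambda_j(t,\xi)$ is homogeneous of degree $j$ in $\xi$ with coefficients in $\mathcal T\{0\}\otimes\C^{d\times d}$ and $\|\Lambda_j(t,\xi)\|\lesssim|\xi|^j$ uniformly for $t\ge t_0$. For a small parameter $\varepsilon>0$ and weights normalised by $w_j(\xi)|\xi|^{2j}\simeq[\xi]^2$, whence $w_j(\xi)|\xi|^{2j-1}\le\tfrac12$, I consider
\begin{equation}
  \mathcal L_\varepsilon(t,\xi)=\|\widehat U(t,\xi)\|^2+\sum_{j=1}^{d-1}\varepsilon^j\,w_j(\xi)\,\Re\big\langle\Lambda_{j-1}(t,\xi)\widehat U(t,\xi),\Lambda_j(t,\xi)\widehat U(t,\xi)\big\rangle.
\end{equation}
By Cauchy--Schwarz, the bound $\|\Lambda_j\|\lesssim|\xi|^j$, and the normalisation, each cross term is $\lesssim\varepsilon^j\|\widehat U\|^2$, so that $\tfrac12\|\widehat U\|^2\le\mathcal L_\varepsilon\le2\|\widehat U\|^2$ uniformly in $t\ge t_0$ and $\xi$, once $\varepsilon$ is small enough.

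Differentiating $\mathcal L_\varepsilon$ along the flow is the computational core. The term $\partial_t\|\widehat U\|^2$ contributes $-2\langle\Re B(t)\widehat U,\widehat U\rangle$, which by (B1)--(B2) dominates $-c\|\Lambda_0(t)\widehat U\|^2$; using the identity $\Lambda_j A=A\Lambda_j+\Lambda_{j+1}$ and the skew-adjointness of $\mathrm i A(t,\xi)$, the $j$-th cross term contributes, to leading order, $-\varepsilon^j w_j(\xi)\|\Lambda_j(t,\xi)\widehat U\|^2$ plus: (i) an off-diagonal cross term of $(j+1)$-type, absorbed into the squares at adjacent levels by Young's inequality once the parameters obey $\varepsilon\gg\varepsilon^2\gg\cdots$; (ii) the contributions of $-B(t)$ hitting the cross terms, absorbed into $\|\Lambda_0\widehat U\|^2$ and the squares; and (iii) errors from $\partial_t\Lambda_j(t,\xi)$, which by $A_k,B\in\mathcal T\{0\}$ are $\mathcal O((1+t)^{-1})$ multiples of cross terms and hence absorbable for $t_0$ large. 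One thus obtains
\begin{equation}
  \partial_t\mathcal L_\varepsilon(t,\xi)\le-c\Bigl(\|\Lambda_0(t)\widehat U\|^2+\sum_{j=1}^{d-1}\varepsilon^j w_j(\xi)\,\|\Lambda_j(t,\xi)\widehat U\|^2\Bigr)\le-\gamma[\xi]^2\mathcal L_\varepsilon(t,\xi),\qquad t\ge t_0,
\end{equation}
the last inequality using the homogeneity $\|\Lambda_j(t,\xi)v\|=|\xi|^j\|\Lambda_j(t,\xi/|\xi|)v\|$, the normalisation $w_j(\xi)|\xi|^{2j}\simeq[\xi]^2$, the bound $[\xi]\le1$, the equivalence $\mathcal L_\varepsilon\simeq\|\widehat U\|^2$, and the uniform joint coercivity of $\Lambda_0(t,\eta),\dots,\Lambda_{d-1}(t,\eta)$ on $|\eta|=1$ supplied by (B3). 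Grönwall's inequality on $[t_0,\infty)$ together with the energy estimate \eqref{eq:un-en-b} on $[0,t_0]$ then converts this into $\|\widehat U(t,\xi)\|^2\le C\e^{-\gamma t[\xi]^2}\|\widehat U_0(\xi)\|^2$ with $C$ and $\gamma$ depending only on the coefficient matrices.

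The hard part is the bookkeeping in the third step: one must pick the weights $w_j$ and the hierarchy of powers of $\varepsilon$ so that, in a single sweep, every ``bad'' term --- the off-diagonal cross terms at each level, the dissipative cross terms, and (a feature absent from the autonomous setting of \cite{BZ:2011}) the time-derivatives of the coefficient matrices --- is genuinely dominated by the ``good'' squares $w_j\|\Lambda_j\widehat U\|^2$; this is a finite but delicate induction on $j$ realising Villani's cascade. A further point of care is that (B3) is formulated with $B$ while the energy dissipates only $\langle\Re B\widehat U,\widehat U\rangle$: the two are equivalent when $B(t)$ is self-adjoint (then $\Re B=B$ and the kernels coincide), and in the general case one reconciles them using (B2) and the uniform block structure of $B(t)$ exhibited in Section~\ref{sec2}, if necessary absorbing the skew part $\tfrac12\bigl(B(t)-B(t)^*\bigr)$ into the transport term.
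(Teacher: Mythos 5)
Your architecture — a hypocoercive Lyapunov functional, equivalence to $\|\widehat U\|^2$, a differential inequality, and Gr\"onwall — is the same as the paper's, which itself follows Beauchard--Zuazua. But there is a genuine gap precisely at the step you flag as merely ``bookkeeping''.

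The time-derivative errors from $\partial_t\Lambda_j(t,\xi)$ (in the paper's notation, from $\partial_t\bigl(B(t)A(t,\xi/|\xi|)^{j}\bigr)$) are of size $\mathcal O(t^{-1})\min\{|\xi|,|\xi|^{-1}\}\,\|\widehat U\|^2$, while the coercive good term you are hoping will swallow them is of size $\gamma[\xi]^2\|\widehat U\|^2\simeq\gamma\min\{|\xi|^2,1\}\|\widehat U\|^2$. For $|\xi|\le1$ the ratio of bad to good is $\mathcal O\bigl(1/(\gamma\,t|\xi|)\bigr)$, which is \emph{not} small when $t|\xi|\lesssim 1$, no matter how large you take $t_0$: for every fixed $t\ge t_0$ there are frequencies $|\xi|\lesssim 1/t$ for which the absorption fails. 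Your claim that these errors are ``absorbable for $t_0$ large'' is therefore false; the differential inequality $\partial_t\mathcal L_\varepsilon+\gamma[\xi]^2\mathcal L_\varepsilon\le0$ can only be established on the zone $t|\xi|\gtrsim1$. The paper's proof is explicit on this point: it proves the Lyapunov inequality only for $t|\xi|\gtrsim1$, and on the complementary zone $t|\xi|\lesssim1$ it invokes an entirely different mechanism, namely the block-diagonalisation of Section~\ref{sec2} together with Lemma~\ref{lem:sysDP:sol_est-1}, which treats $A(t,\xi)$ as a small perturbation of $B(t)$ and already supplies the required bound there. Without that second ingredient the argument is incomplete.

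A secondary point: you build your functional from iterated commutators $\Lambda_{j+1}=[\Lambda_j,A]$, whereas (B3) and the paper's functional are phrased with the Kalman chain $B,\,BA,\,\dots,\,BA^{d-1}$. The kernels of the two families coincide (an easy induction shows $\bigcap_j\ker\Lambda_j=\bigcap_j\ker BA^j$), so qualitatively you are entitled to the same coercivity; but since (B3) is the quantitative hypothesis you have available, you would still need to translate it into a uniform lower bound for $\sum_j\varepsilon^jw_j\|\Lambda_j(t,\eta)v\|^2$ on $|\eta|=1$, uniformly in $t\ge t_0$. That translation is routine but should not be silently assumed. The observation at the end, that (B3) involves $B$ while the energy sees only $\Re B$, and that the block structure from Section~\ref{sec2} reconciles the two, is a reasonable remark but again does not address the main gap above.
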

\begin{proof}[Sketch of proof]
The proof follows essentially \cite[Sec. 2.2]{BZ:2011}, we will only explain the major steps and necessary modifications to incorporate the time-dependence of 
matrices. As the problem is $L^2$-well-posed and dissipative, it suffices to prove the statement only for $t\ge t_0$ with a sufficiently large $t_0$.

For a still to be specified selection $\epsilon=(\epsilon_0,\ldots, \epsilon_{d-1})$ of positive reals, $\epsilon_j>0$, we consider the Lyapunov functional 
\begin{multline}
   \mathbb L_\epsilon[\widehat U;t,\xi] = \| \widehat U(t,\xi) \|^2 +\\+ \min\{|\xi|,|\xi|^{-1}\} \, \sum_{j=1}^{d-1} \epsilon_j \Im \langle B(t) A(t,{\xi}/{|\xi|}) ^{j-1} \widehat U(t,\xi) , B(t) A(t,{\xi}/{|\xi|})^{j} \widehat U(t,\xi) \rangle.
\end{multline}
The uniform Kalman rank condition (B3) implies that for suitable choices of the parameters $\epsilon$ and for $t\ge t_0$ the two-sided estimate
\begin{equation}
  \frac14 \|\widehat U(t,\xi)\|^2  \le  \mathbb L_\epsilon[\widehat U;t,\xi] \le 4 \|\widehat U(t,\xi)\|^2
\end{equation}
holds true. Therefore, all we have to do is to prove the desired estimate for $ \mathbb L_\epsilon[\widehat U;t,\xi] $, which follows from the differential inequality
\begin{equation}
   \partial_t  \mathbb L_\epsilon[\widehat U;t,\xi] + \gamma [\xi]^2  \mathbb L_\epsilon[\widehat U;t,\xi] \le 0
\end{equation}
by an application of Gronwall's inequality. Our aim is to find a suitable $\gamma$ and a suitable family $\epsilon$ for this differential inequality to be true. 

Formally differentiating $\mathbb L_\epsilon[\widehat U;t,\xi]$ with respect to $t$ yields the terms considered by \cite[Sec. 2.2]{BZ:2011} together with further
terms containing derivatives of the coefficient matrices. The first ones are estimated exactly like in that paper, while the latter ones are bounded by 
\begin{equation}\label{eq:BZ-rem-terms}
 \mathcal O(t^{-1} ) \min\{|\xi|,|\xi|^{-1}\}  \|\widehat U(t,\xi)\|^2
\end{equation}
due to our assumptions. As \eqref{eq:BZ-rem-terms} is dominated by  $\gamma [\xi]^2  \mathbb L_\epsilon[\widehat U;t,\xi]$ whenever $t|\xi|\gtrsim1$, the desired bound follows on this zone by choosing $\gamma$ small.
It remains to consider $t|\xi|\lesssim1$. Here $A(t,\xi)$ can be treated as small perturbation of $B(t)$ and the diagonalisation scheme and Lemma~\ref{lem:sysDP:sol_est-1} yield already the corresponding bound.
\end{proof}

\section{Estimates for solutions to partially dissipative hyperbolic systems}\label{sec4}

\paragraph{$L^p$--$L^q$ estimates} First we will conclude parabolic type $L^p$--$L^q$ decay estimates. They are a direct consequence of Lemma~\ref{lem:sysDP:KalmEst} in combination 
with H\"older's inequality and the boundedness properties of Fourier transform.

\begin{thm}
Assume (B1), (B2), (B3). Then all solutions to  \eqref{eq:4:CP} satisfy 
\begin{equation}
   \| U(t,\cdot)\|_{L^q} \le C (1+t)^{-\frac n2(\frac1p-\frac1q)} \| U_0\|_{W^{p,r}}
\end{equation}
for all $1\le p\le 2\le q\le \infty$ and with $r\ge n(1/p-1/q)$.
\end{thm}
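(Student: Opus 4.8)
The plan is to derive the $L^p$--$L^q$ estimate directly from the pointwise Fourier-space bound of Lemma~\ref{lem:sysDP:KalmEst} by splitting into low and high frequencies. Write $U(t,\cdot) = \mathcal F^{-1}[\widehat U(t,\cdot)]$ and use Lemma~\ref{lem:sysDP:KalmEst} to get $\|\widehat U(t,\xi)\| \le C \mathrm e^{-\gamma t [\xi]^2/2}\|\widehat U_0(\xi)\|$ pointwise. The key observation is that $\mathrm e^{-\gamma t [\xi]^2}$ behaves like a heat kernel multiplier for $|\xi|\le 1$ and like the fixed exponential $\mathrm e^{-\gamma t/2}$ (times something harmless) for $|\xi|\ge 1$, so one expects the parabolic $L^p$--$L^q$ decay rate $(1+t)^{-\frac n2(\frac1p-\frac1q)}$ with a loss of $r\ge n(1/p-1/q)$ derivatives absorbing the high-frequency part.

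First I would treat the endpoint cases $p=1$, $q=\infty$ and $p=q=2$, then interpolate (Riesz--Thorin) to obtain the full range $1\le p\le 2\le q\le\infty$; by duality it suffices to handle $1\le p \le 2 = q$ and $2 = p \le q \le \infty$, which again reduces by interpolation to the two endpoints. For the $L^1$--$L^\infty$ bound: $\|U(t,\cdot)\|_{L^\infty} \le (2\pi)^{-n}\|\widehat U(t,\cdot)\|_{L^1} \le C\int \mathrm e^{-\gamma t[\xi]^2/2}\|\widehat U_0(\xi)\|\,\mathrm d\xi$. Split the integral at $|\xi|=1$. On $|\xi|\le 1$ use $[\xi]^2 \simeq |\xi|^2$ and bound $\|\widehat U_0(\xi)\| \le \|\widehat U_0\|_{L^\infty}\le (2\pi)^{-n}\|U_0\|_{L^1}$, so the contribution is $\lesssim \|U_0\|_{L^1}\int_{|\xi|\le1}\mathrm e^{-\gamma t|\xi|^2/2}\,\mathrm d\xi \lesssim (1+t)^{-n/2}\|U_0\|_{L^1}$. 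On $|\xi|\ge 1$ use $[\xi]^2\simeq 1$, so $\mathrm e^{-\gamma t[\xi]^2/2}\lesssim \mathrm e^{-\gamma t/4}\langle\xi\rangle^{-r}$ for $t\ge 0$ (since $\mathrm e^{-\gamma t/4}$ decays faster than any polynomial rate, and $\langle\xi\rangle^{-r}\mathrm e^{-\gamma t/4}$ dominates $\mathrm e^{-\gamma t/2}$ there after adjusting constants), giving $\lesssim \mathrm e^{-\gamma t/4}\|\langle\xi\rangle^{-r}\widehat U_0\|_{L^1}$; by Cauchy--Schwarz this is controlled by $\|\langle\xi\rangle^{r}\widehat U_0\|_{L^2}\simeq\|U_0\|_{H^r}\hookrightarrow\|U_0\|_{W^{1,r}}$ when $r> n/2$, and in general by $\|U_0\|_{W^{1,r}}$ via the Hausdorff--Young / Sobolev embedding $W^{1,r}\hookrightarrow$ (Fourier-$L^1$ of functions with $\langle\xi\rangle^{-r}$ weight) for $r\ge n = n(1-0)$. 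For the $L^2$--$L^2$ bound simply apply Plancherel: $\|U(t,\cdot)\|_{L^2} = \|\widehat U(t,\cdot)\|_{L^2}\le C\|\widehat U_0\|_{L^2} = C\|U_0\|_{L^2}$, with no decay and $r=0$ allowed, consistent with $n(1/2-1/2)=0$.

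The interpolation step then yields, for general $1\le p\le 2\le q\le\infty$, the bound $\|U(t,\cdot)\|_{L^q}\lesssim (1+t)^{-\frac n2(\frac1p-\frac1q)}\|U_0\|_{W^{p,r}}$ with $r = n(1/p-1/q)$; larger $r$ is trivially allowed since $W^{p,r}\hookrightarrow W^{p,r'}$ for $r\ge r'$. One should carry out the interpolation on the linear operator $U_0\mapsto U(t,\cdot)$ for each fixed $t$, tracking that the operator norm bounds at the endpoints both carry the factor $(1+t)^{-n/2}$ (for the $L^1$--$L^\infty$ corner) and $1$ (for the $L^2$--$L^2$ corner), so the interpolated norm picks up $(1+t)^{-n/2\cdot\theta}$ with $\theta$ the interpolation parameter, which matches $\frac n2(\frac1p-\frac1q)$. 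The main obstacle — and the only place that needs genuine care — is bookkeeping the high-frequency part: one must verify that $\mathrm e^{-\gamma t[\xi]^2}$ really is dominated by $C\langle\xi\rangle^{-r}(1+t)^{-\frac n2(\frac1p-\frac1q)}$ uniformly on $|\xi|\ge1$ for the chosen $r$, and that the resulting high-frequency operator maps $W^{p,r}\to L^q$ boundedly for $1\le p\le 2\le q\le\infty$; this is a standard Mikhlin/Hausdorff--Young argument but is where the hypothesis $r\ge n(1/p-1/q)$ is actually consumed, and it is worth stating explicitly rather than hiding inside "a direct consequence."
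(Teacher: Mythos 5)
Your plan is a genuine alternative to the paper's argument: the paper handles all $1\le p\le 2\le q\le\infty$ in one stroke on the Fourier side, using Hausdorff--Young, then H\"older with exponent $\frac1r=\frac1{q'}-\frac1{p'}=\frac1p-\frac1q$ against the weight $\e^{-\gamma t|\xi|^2}$, and then Hausdorff--Young again, with the high-frequency part absorbed by Sobolev regularity; you instead propose endpoint estimates plus Riesz--Thorin. The paper's route avoids interpolation altogether and is cleaner here, while yours is more modular but has two concrete gaps.

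The first gap is in the interpolation geometry. Riesz--Thorin between the two points $(p,q)=(1,\infty)$ and $(p,q)=(2,2)$ only yields operator bounds along the line segment joining $(1/p,1/q)=(1,0)$ and $(1/2,1/2)$, i.e., the diagonal $q=p'$, \emph{not} the full rectangle $1\le p\le 2\le q\le\infty$ (which is the square $[1/2,1]\times[0,1/2]$ in the $(1/p,1/q)$ plane). To cover the rectangle by Riesz--Thorin you need all four corner estimates: $L^1\to L^\infty$, $L^2\to L^2$, $L^1\to L^2$, and $L^2\to L^\infty$ (the latter two are easy --- Plancherel plus Cauchy--Schwarz against $\e^{-\gamma t[\xi]^2}$ give $(1+t)^{-n/4}$). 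Your sentence ``by duality it suffices to handle $1\le p\le2=q$ and $2=p\le q\le\infty$, which again reduces by interpolation to the two endpoints'' does not accomplish this: duality relates those two edges to each other (since $T^*$ maps $L^{q'}\to L^{p'}$) but does not reduce the interior of the rectangle to them without a factorisation of the operator (e.g.\ writing the multiplier as a product of two square-root factors), which you do not set up.

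The second gap is the high-frequency Sobolev bookkeeping. The embedding you write, ``$\|U_0\|_{H^r}\hookrightarrow\|U_0\|_{W^{1,r}}$ when $r>n/2$,'' is false as stated: $H^r=W^{2,r}$ and $W^{1,r}$ are not comparable in that direction, and the Sobolev embedding of the $L^1$-based space into the $L^2$-based space loses $n/2$ derivatives ($W^{1,r}\hookrightarrow H^{r-n/2}$), so the Cauchy--Schwarz route as written would require roughly $r>n/2+$ (something), more than $r\ge n$. The clean fix for the $L^1\to L^\infty$ corner is to avoid the $L^2$ detour: use $\|\langle\xi\rangle^r\widehat U_0\|_{L^\infty}\le\|(1-\Delta)^{r/2}U_0\|_{L^1}=\|U_0\|_{W^{1,r}}$ directly and then integrate $\langle\xi\rangle^{-r}$ over $|\xi|\ge1$. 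This is also closer in spirit to what the paper means by ``Sobolev embedding theorem yields exponential decay under the imposed regularity.''

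In short: your low/high frequency split and the role of Lemma~\ref{lem:sysDP:KalmEst} are exactly right, but you should either adopt the paper's direct H\"older-in-Fourier argument for the low frequencies, or supply the two missing corner estimates (or a factorisation) to make Riesz--Thorin cover the whole range, and you should repair the high-frequency embedding chain.
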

\begin{proof}
For $|\xi|\gtrsim 1$ Lemma~\ref{lem:sysDP:KalmEst}  in combination with Sobolev embedding theorem yields exponential decay under the imposed regularity. Therefore, it is enough to
consider bounded $\xi$. But then the estimate follows from Lemma~\ref{lem:sysDP:KalmEst}
\begin{align*}
   \| U(t,\cdot) \|_{L^q} \le \|\widehat U(t,\cdot)\|_{L^{q'}} \le \| \mathrm e^{-\gamma t|\cdot|^2} \|_{L^r} \|\widehat U_0\|_{L^{p'}} \le C (1+t)^{-\frac n2(\frac1p-\frac1q)} \|U_0\|_{L^p} 
\end{align*}
provided $\supp \widehat U_0(\xi) \subset \{ \xi : |\xi|\le 1\}$ and $\frac1 r=\frac1{q'}-\frac1{p'}=
\frac1p-\frac1q$.
\end{proof}

Note, that this a typical parabolic decay estimate. It highlights an underlying diffusive structure. There are related improved decay estimates in this situation, e.g., by assuming moment  and decay conditions on the data similar to \cite{Ikehata:2003b}, \cite{Ikehata:2003e}.

\paragraph{Diffusion phenomena}
Now we will combine the estimates of the previous section with slightly improved results 
obtained from the low-frequency diagonalisation. First we construct a parabolic reference 
problem, whose fundamental solution is given by $\Xi_2(t,s,\xi)$ and afterwards we will explain why and in what sense solutions are asymptotically equivalent.

Following Corollary~\ref{cor:sysDP:par-terms} it is reasonable to consider the parabolic problem\begin{equation}\label{sysDP:eq:par} 
   \partial_t  w = \nabla \cdot \boldsymbol \alpha(t) \nabla w + \boldsymbol\beta (t) \cdot\nabla w + 
 \mathrm i  \gamma(t) w,\qquad w(t_0)=w_0,
\end{equation}
for a scalar-valued unknown function $w_0$. To relate both problems, we observe that
the first row of $\mathcal E_k(t,s,0)$ tends to a limit as $t\to\infty$.  This is just a consequence of the integrability of $R_{k+1}(t,0)$ for $k\ge2$. We use this to define
\begin{equation}
   W_k(s) = \lim_{t\to\infty} e_1^\top \mathcal E_k(t,s,0).
\end{equation}
It is easy to see that $W_k(s)=W_2(s)$ for all $k$.

\begin{lem}\label{sysDP:lem:solEst-2} 
The fundamental solution $\mathcal E_k(t,s,\xi)$, $k$ sufficiently large, satisfies the estimate
\begin{equation}
   \| \mathcal E_k(t,s,\xi) - \Xi_k(t,s,\xi) e_1 W_2(s) \| \le C_k (1+t)^{-\frac12},\qquad t\ge s\ge t_0,
\end{equation}
uniformly on $|\xi|\le 1$.
\end{lem}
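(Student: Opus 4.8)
The plan is to control $\mathcal E_k(t,s,\xi)$ on $|\xi|\le 1$ by splitting into the two frequency regimes already isolated in the paper, $t|\xi|^2\le\delta$ and $t|\xi|^2\ge\delta$, and handling each with the tools available. On the regime $t|\xi|^2\ge\delta$ both $\mathcal E_k(t,s,\xi)$ and $\Xi_k(t,s,\xi)$ decay: Lemma~\ref{lem:sysDP:KalmEst} gives $\|\widehat U\|\lesssim \mathrm e^{-\gamma t[\xi]^2}\|\widehat U_0\|$, hence (since $\mathcal E_k$ differs from the actual fundamental solution only by the uniformly bounded conjugations $M$, $N_k$) $\|\mathcal E_k(t,s,\xi)\|\lesssim \mathrm e^{-\gamma' t|\xi|^2}$ for $|\xi|\le 1$, $t\ge s\ge t_0$. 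Since $t|\xi|^2\ge\delta$, the right side is $\lesssim \mathrm e^{-\gamma'\delta t|\xi|^2/\delta}$... more usefully, $\mathrm e^{-\gamma' t|\xi|^2}\le (\gamma' t|\xi|^2)^{-1/2}\cdot C\le C_\delta (1+t)^{-1/2}$ when $|\xi|\ge\sqrt{\delta/t}$, using that $|\xi|^2\ge \delta/t$. The same bound applies to $\Xi_k$ because $|\Xi_k(t,s,\xi)|=\exp(-\Im\int_s^t f_1^{(k)}(\theta,\xi)\,\d\theta)$ and by Corollary~\ref{cor:sysDP:par-terms} the real part of $\boldsymbol\alpha$ is positive definite, so $|\Xi_k|\lesssim \mathrm e^{-c t|\xi|^2}$ modulo the decaying lower-order contributions from $\boldsymbol\beta,\gamma$. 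Thus on this regime the difference is trivially $\le C_k(1+t)^{-1/2}$.

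On the regime $t|\xi|^2\le\delta$ the idea is to use the sharp pointwise comparison $\|\mathcal E_k(t,s,\xi)-\Theta_k(t,s,\xi)\|\to 0$ from the Neumann series analysis (valid for $k\ge1$ on $\mathcal Z_{\rm ell}(c_k)\cap\{t|\xi|^{(k+1)/2}\le\delta\}$, in particular on $t|\xi|^2\le\delta$ once $k\ge3$), together with Lemma~\ref{lem:sysDP:sol_est-1} which pins $\|\mathcal E_k\|\le C_k|\Xi_k(t,s,\xi)|$. Writing $\Theta_k=\diag(\Xi_k,\tilde\Theta_k)$ with $\tilde\Theta_k$ exponentially decaying, the difference $\mathcal E_k-\Xi_k e_1 W_2(s)$ splits as
\begin{equation}
   \bigl(\mathcal E_k(t,s,\xi)-\Theta_k(t,s,\xi)\bigr) + \bigl(\Theta_k(t,s,\xi) - \Xi_k(t,s,\xi)e_1 W_2(s)\bigr),
\end{equation}
and the second bracket is itself, modulo the exponentially small $\tilde\Theta_k$-block, the quantity $\Xi_k(t,s,\xi)\bigl(e_1^\top - e_1 W_2(s)\bigr)$ — so really one must compare $e_1^\top\mathcal E_k(t,s,\xi)$ (the only non-decaying row) with $\Xi_k(t,s,\xi)W_2(s)$. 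For this I would run the Volterra/Neumann expansion at $\xi$ against the one at $\xi=0$: since $W_2(s)=\lim_{t\to\infty}e_1^\top\mathcal E_k(t,s,0)$, and $R_{k+1}(\cdot,0)$ is integrable for $k\ge2$, the tail $e_1^\top\mathcal E_k(t,s,0)-W_2(s)$ is $O\!\bigl(\int_t^\infty\|R_{k+1}(\theta,0)\|\d\theta\bigr)$; the $\xi$-dependence of $R_{k+1}$ and of $\Xi_k/\tilde\Theta_k$ contributes an extra factor $|\xi|^{k+1}$ times growing time weights, which on $t|\xi|^2\le\delta$ is controlled. The upshot is $\|e_1^\top\mathcal E_k - \Xi_k W_2(s)\|\le C_k\bigl((1+t)^{-(k-1)/2} + \text{error from }|\xi|^{k+1}t\text{-weights}\bigr)$, and choosing $k$ large enough (the statement only asks for $k$ sufficiently large) makes this $\le C_k(1+t)^{-1/2}$ uniformly on $t|\xi|^2\le\delta$, $|\xi|\le1$.

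The main obstacle I anticipate is the bookkeeping in the intermediate regime where $t|\xi|^2$ is of order $\delta$: one needs the $(1+t)^{-1/2}$ gain from the exponential $\mathrm e^{-\gamma t|\xi|^2}$ to kick in exactly where the small-frequency expansion stops giving a good rate, and matching these requires tracking how the constants in Lemma~\ref{lem:sysDP:sol_est-1} and in the Neumann convergence depend on $\delta$ and $k$ — specifically verifying that the threshold $c_k$ and the error $\|\mathcal E_k-\Theta_k\|$ can be made small independently of the parameter $\delta$ used in the high-frequency split. A secondary technical point is justifying that $W_k(s)=W_2(s)$ (asserted just above the lemma) propagates through the estimate, i.e. that replacing $W_k$ by $W_2$ in the statement costs nothing; this follows because $e_1^\top\mathcal E_k(t,s,0)$ is independent of $k$ in its limit, the higher-order block-diagonalisations altering only the exponentially decaying components. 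Everything else — the conjugation back through $N_k$ and $M$, the use of Gronwall-type bounds on the Neumann series — is routine given the results already established.
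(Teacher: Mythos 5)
There is a genuine gap in your Part 1: the two-zone split $\{t|\xi|^2\ge\delta\}\cup\{t|\xi|^2\le\delta\}$ does not give the claimed rate. Your estimate
\begin{equation*}
\mathrm e^{-\gamma' t|\xi|^2}\le C(\gamma' t|\xi|^2)^{-1/2}\le C_\delta(1+t)^{-1/2}
\end{equation*}
fails at the second step: $(t|\xi|^2)^{-1/2}=t^{-1/2}|\xi|^{-1}$, and the only lower bound available on this zone is $|\xi|\ge\sqrt{\delta/t}$, so $(t|\xi|^2)^{-1/2}\le\delta^{-1/2}$ — a constant, not $(1+t)^{-1/2}$. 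In other words, for $\xi$ near the boundary $t|\xi|^2\approx\delta$ the exponential $\mathrm e^{-\gamma't|\xi|^2}\approx\mathrm e^{-\gamma'\delta}$ is bounded away from zero uniformly in $t$, so no polynomial decay can be extracted there. To fix this you would need a fixed lower bound $|\xi|\ge c>0$, which is not available for small $\xi$.

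The paper resolves this with a three-zone decomposition. It places the large-frequency cutoff at $t|\xi|^2\ge\delta\log t$ (not $\ge\delta$), so that $\mathrm e^{-\tilde\gamma t|\xi|^2}\le t^{-\tilde\gamma\delta}\lesssim t^{-1/2}$ after choosing $\delta$ with $\tilde\gamma\delta\ge 1/2$; the small-frequency zone $t|\xi|^2\le\delta$ is treated as you describe via the Neumann series; and a third, intermediate zone $\delta\le t|\xi|^2\le\delta\log t$ is handled separately. In that gap $\Xi_k^{-1}(t,s,\xi)$ can grow like a fixed power $t^{\tilde\gamma\delta}$, and one needs $\|R_{k+1}(\theta,\xi)\|\lesssim\theta^{-k-1+\epsilon}$ with $k$ large enough to make $\Xi_k^{-1}\Theta_k R_{k+1}$ integrable and push the error below $t^{-1/2}$. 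This is where the hypothesis ``$k$ sufficiently large'' actually enters — not in the small-frequency estimate (which works already for $k\ge2$), so your account of where $k$ large is used is also off. Your Part 2 is essentially the paper's Part 2 and is sound; the missing piece is the logarithmic intermediate zone and the recognition that the exponential alone cannot close the argument there.
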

\begin{proof}
We make use of a constant $\delta>0$, to be fixed later on, to decompose the extended phase space into several zones. 

\paragraph{Part 1} If $t|\xi|^{2} \ge \delta \log t$ with $\delta$ chosen large enough, both terms 
can be estimated separately by $\exp(-\tilde\gamma t|\xi|^2)$ for some constant $\gamma$.
This follows for the first one by  Lemma~ \ref{lem:sysDP:KalmEst} and for the second
by the parabolicity of \eqref{sysDP:eq:par}  in consequence of 
Corollary~\ref{cor:sysDP:par-terms}.  But then
\begin{equation}
\mathrm e^{-\tilde\gamma t|\xi|^2} \le \mathrm e^{-\tilde \gamma\delta \log t} = t^{-\tilde \gamma\delta} \lesssim t^{-\frac12},\qquad \tilde\gamma\delta\ge\frac12.
\end{equation} 

\paragraph{Part 2}
If  $t|\xi|^2\le \delta$ for some $\delta$, we use the results from the asymptotic integration of the diagonalised system. First, we claim that $\Xi_k^{-1}(t,t_0,\xi)e_1^\top\mathcal E_k(t,t_0,\xi)$
converges locally uniform in $\xi$ as $t\to t_\xi$ for $t_\xi |\xi|^2 = \delta$. To see this, we use
the Neumann series representation of $\Xi_k^{-1}(t,t_0,\xi)\mathcal E_k(t,t_0,\xi)$ (i.e.,  \eqref{sysDP:eq:NeumannS} with $\Theta_k$ replaced by $ \Xi_k^{-1}\Theta_k $) multiplied by $e_1^\top$
\begin{align}
  \Xi_k^{-1}&(t,t_0,\xi) e_1^\top\mathcal E_k(t,t_0,\xi)  = e_1^\top + \sum_{\ell=1}^\infty \mathrm i^\ell \int_{t_0}^t  e_1^\top R_{k+1}(t_1,\xi)  \int_{t_0}^{t_1} \Xi_k^{-1}(t_1,t_2,\xi) \Theta_k (t_1,t_2,\xi) R_{k+1}(t_2,\xi) \notag\\& \qquad\qquad \times \int_{t_0}^{t_2}\cdots \int_{t_0}^{t_{\ell-1}}  \Xi_k^{-1}(t_{\ell-1},t_\ell,\xi)  \Theta_k(t_{\ell-1},t_\ell,\xi) R_{k+1}(t_\ell,\xi) \d t_\ell\cdots \d t_2 \d t_1
\end{align}
and use the Cauchy criterion.
We denote the resulting limit as $W_k(s,\xi)$ and observe that it coincides with $W_k(s)$ for $\xi=0$. Next, we show the estimates
\begin{equation}\label{eq:5.6}
   \| W_k(s,\xi) - W_k(s) \| \lesssim |\xi|,
\end{equation}
and
\begin{equation}
  \| e_1^\top \mathcal E_k(t,s,\xi) - \Xi_k(t,s,\xi) W_k(s,\xi) \| \lesssim t^{-\frac12}.
\end{equation}
The first of these estimates follows as uniform limit for estimates of the difference
\begin{equation}
\Xi_k^{-1}(t,s,0)e_1^\top\mathcal E_k(t,s,0)-\Xi_k^{-1}(t,s,\xi)e_1^\top\mathcal E_k(t,s,\xi).
\end{equation} 
Indeed, using again the Neumann series we see that the first terms are equal. The second terms
are reduced to the estimate
\begin{equation}
\|R_{k+1}(t_1,\xi)-R_{k+1}(t_1,0)\| \lesssim |\xi| (t_1^{-k} + |\xi|^k),\qquad t_1\ge t_0,
\end{equation}
following directly from the definition of the $\mathcal P\{k+1\}$-classes. 
Therefore, 
\begin{equation}
   \|\Xi_k^{-1}(t,s,0)e_1^\top\mathcal E_k(t,s,0)-\Xi_k^{-1}(t,s,\xi)e_1^\top\mathcal E_k(t,s,\xi)\|\lesssim |\xi| \int_s^t (\theta^{-k} + |\xi|^k) \mathrm d\theta  + 
   \mathrm{l.o.t.}
\end{equation}
and the right-hand side is uniformly bounded by $|\xi|$. Taking limits proves the estimate.
The second estimate is similar. Again using the Neumann series we see that this difference
can be estimated by
\begin{multline}
   \| \Xi_k^{-1}(t,s,\xi)  e_1^\top \mathcal E_k(t,s,\xi) - W_k(s,\xi) \| 
 \\  \lesssim \int_t^{t_\xi} \| R_{k+1}(\tau,\xi)\| \exp\left(\int_{t_0}^{t_\xi} \|R_{k+1}(\theta,\xi) \|\mathrm d\theta
   \right)\mathrm d\tau \lesssim t^{-\frac12} 
\end{multline}
due to $\|R_{k+1}(t,\xi)\|=\mathcal O(t^{-\frac32})$ for $k\ge 2$ and $t|\xi|^2\le \delta$.

Combining both of the above estimates and using that the other rows in $\mathcal E_k$ are
exponentially decaying we get
\begin{align}
    \|  \mathcal E_k(t,s,\xi) - \Xi_k(t,s,\xi) e_1 W_k(s) \|
     & \le   \|  \mathcal E_k(t,s,\xi) - \Xi_k(t,s,\xi) e_1 W_k(s,\xi) \| \notag\\&\quad+ \|  \Xi_k(t,s,\xi) e_1 W_k(s,\xi)- \Xi_k(t,s,\xi) e_1 W_k(s)\| \notag\\& \lesssim t^{-\frac12} + \mathrm e^{-\tilde \gamma t |\xi|^2}|\xi| 
     \lesssim t^{-\frac12}.
\end{align}

\paragraph{Part 3} 
It remains to consider the logarithmic gap between both parts, i.e., $\delta \le t|\xi|^2 \le \delta\log t$. Here we use that for $k$ sufficiently large the remainder term $R_{k+1}(t,\xi)$ decays as
$t^{-k-1+\epsilon}$, while the polynomial growth rate of $\Xi^{-1}_k(t,\xi)$ is independent of 
$k$ for large $k$. Choosing $k$ large enough, the Neumann series argument gives
\begin{equation}
  \| \Xi_k^{-1}(t,s,\xi) e_1^\top \mathcal E_k(t,s,\xi) - \tilde W_k(s,\xi) \| 
  \lesssim \int_t^{\tilde t_\xi} \| \Xi_k^{-1}(\theta,s,\xi) R_{k+1}(\theta,\xi) \| \mathrm d\theta  
  \lesssim t^{-\frac12}
\end{equation}
with $\tilde t_\xi$ defined by $\tilde t_\xi |\xi|^2 = \delta \log \tilde t_\xi$ and 
\begin{equation}
\tilde W_k(s,\xi) = \lim_{t\to\tilde t_\xi} \Xi_k^{-1} (t,s,\xi) e_1^\top \mathcal E_k(t,s,\xi).
\end{equation}
The
existence of the latter limit follows for large $k$ and again $\tilde W_k(s,\xi) - W_k(s)$
coincides up to order $\mathcal O(|\xi|)$.
\end{proof}

To obtain a statement in terms of the original equation, we introduce
$K(t,\xi) = M(t)N_2(t,\xi) e_1$. By definition we have 
$K(t,\xi) \in( \mathcal P \{0\}+\mathcal P \{1\}+ \mathcal P\{2\})\otimes\C^{d}$. We define further $w_0 = W U_0$ in such a way that we cancel 
the main term of the solution within $\mathcal Z_{\rm ell}(c_k)\cap \{ t|\xi| \le \delta \}$, i.e., 
we define
\begin{equation}\label{eq:4:init-data-DP}
   \widehat w_0 = W_2(t_0) N_2^{-1}(t_0,\xi) M^{-1}(t_0) \mathcal E(t_0,0,\xi) \chi(\xi) \widehat U_0
\end{equation}
with $\chi(\xi)\in C^\infty_0(\mathbb R^n)$, $\chi(\xi)=1$ near $\xi=0$ and $\mathrm{supp}\,\chi\subset B_{c_2}(0)$. Then the estimate of Lemma~\ref{sysDP:lem:solEst-2} implies the following statement. The logarithmic term is caused by comparing $\Xi_k(t,s,\xi)$ with $\Xi_2(t,s,\xi)$.

\begin{thm} Assume (B1), (B2), (B3) and let $U(t,x)$ be solution to \eqref{eq:4:CP}. The the solution $w(t,x)$ to \eqref{sysDP:eq:par} with data given by \eqref{eq:4:init-data-DP} satisfies
\begin{equation}\label{eq:diff-phen}
\| U(t,\cdot) -  K(t,\D) 
w(t,\cdot) \|_{L^2}   \le 
    C' (1+t)^{-\frac12} \log(\mathrm e+ t) \|U_0\|_{L^2}.
\end{equation}
\end{thm}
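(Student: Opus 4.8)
The plan is to work entirely on the Fourier side and split $\mathbb{R}^n_\xi$ into the low-frequency ball $\{|\xi|\le c_2\}$, where $\chi\equiv1$, and its complement. On the complement, $\widehat{w}_0$ vanishes and, by Lemma~\ref{lem:sysDP:KalmEst}, $\widehat U(t,\xi)$ decays like $\mathrm e^{-\gamma t[\xi]^2}=\mathrm e^{-\gamma t}$ up to a fixed constant on a bounded annulus and like $\mathrm e^{-\gamma t/\langle\xi\rangle^2}$ at infinity; combined with the fact that $K(t,\D)$ is an operator with symbol in $(\mathcal P\{0\}+\mathcal P\{1\}+\mathcal P\{2\})\otimes\C^d$ (hence bounded on the support, and $K(t,\D)w$ also decaying exponentially there), this piece is bounded by $\mathrm e^{-\gamma' t}\|U_0\|_{L^2}\lesssim(1+t)^{-1/2}\|U_0\|_{L^2}$, which is absorbed. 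So the real content is the ball $|\xi|\le c_2$.

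On $\{|\xi|\le c_2\}$ I would unwind the chain of transformations. Writing $\widehat U(t,\xi)=M(t)N_2^{-1}(t,\xi)\,\mathcal E_2(t,t_0,\xi)\,N_2(t_0,\xi)M^{-1}(t_0)\,\mathcal E(t_0,0,\xi)\widehat U_0$, one substitutes the estimate of Lemma~\ref{sysDP:lem:solEst-2}, namely $\mathcal E_k(t,t_0,\xi)=\Xi_k(t,t_0,\xi)e_1W_2(t_0)+O((1+t)^{-1/2})$, with $k$ large; the error term, after applying $M(t)N_2^{-1}(t,\xi)$ (uniformly bounded on the ball) and the fixed bounded factors on the right, contributes $O((1+t)^{-1/2})\|U_0\|_{L^2}$. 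The leading term becomes $M(t)N_2(t,\xi)e_1\cdot \Xi_k(t,t_0,\xi)\cdot\big[W_2(t_0)N_2^{-1}(t_0,\xi)M^{-1}(t_0)\mathcal E(t_0,0,\xi)\widehat U_0\big]$. Here I recognise $M(t)N_2(t,\xi)e_1=K(t,\xi)$ (using that $N_k$ has the same first column for all $k\ge2$ modulo higher order, so $N_k e_1=N_2e_1$ up to $\mathcal P_\ge\{3\}$ terms which only cost another $|\xi|^3t\lesssim |\xi|$ factor on this region), and the bracket is precisely $\widehat w_0$ from \eqref{eq:4:init-data-DP}. Since $\Xi_2(t,t_0,\xi)=\exp(\mathrm i\int_{t_0}^t f_1^{(2)}(\theta,\xi)\,\d\theta)$ is by construction the fundamental solution of the parabolic problem \eqref{sysDP:eq:par} (Corollary~\ref{cor:sysDP:par-terms}), we get $\Xi_2(t,t_0,\xi)\widehat w_0=\widehat w(t,\xi)$, hence $K(t,\xi)\Xi_2(t,t_0,\xi)\widehat w_0=\widehat{K(t,\D)w}(t,\xi)$.

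What remains is to pass from $\Xi_k$ to $\Xi_2$: the difference $\Xi_k(t,t_0,\xi)-\Xi_2(t,t_0,\xi)=\Xi_2(t,t_0,\xi)\big(\exp(\mathrm i\int_{t_0}^t(f_1^{(k)}-f_1^{(2)})\,\d\theta)-1\big)$, and by Corollary~\ref{cor:sysDP:par-terms} $f_1^{(k)}-f_1^{(2)}\in\mathcal P_\ge\{3\}$, so the exponent is $O(|\xi|^3\min\{t,|\xi|^{-?}\})$; on $t|\xi|^2\le\delta$ this is $O(|\xi|)$, hence harmless, while on the logarithmic gap $\delta\le t|\xi|^2\le\delta\log t$ one only has control by $O(|\xi|^3 t)=O(\log t/|\xi|)$—this is the origin of the advertised logarithmic loss and is handled exactly as in Part~3 of the proof of Lemma~\ref{sysDP:lem:solEst-2}, giving an extra $\log(\mathrm e+t)$ factor multiplying the $(1+t)^{-1/2}$. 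Finally one integrates in $\xi$ over the ball: all pointwise $O((1+t)^{-1/2}\log(\mathrm e+t))$ bounds multiply $\|\widehat U_0(\xi)\|$ pointwise, so Plancherel on the bounded set gives $C'(1+t)^{-1/2}\log(\mathrm e+t)\|U_0\|_{L^2}$. The main obstacle is precisely this logarithmic intermediate zone: there $\Xi_k^{-1}$ grows polynomially (with $k$-independent rate for large $k$) while one needs the remainder $R_{k+1}$ to kill that growth, so one must choose $k$ large in a way consistent with Lemma~\ref{sysDP:lem:solEst-2} and then accept the comparison loss between $\Xi_k$ and $\Xi_2$; keeping track of all the symbol-class bookkeeping uniformly in $\xi$ across the three zones is the delicate part, whereas the algebraic identification of the main term with $K(t,\D)w$ is essentially bookkeeping.
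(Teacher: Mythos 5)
Your proposal takes essentially the same route the paper intends: dispose of frequencies $|\xi|\ge c_2$ by the exponential decay from Lemma~\ref{lem:sysDP:KalmEst}, unwind the chain $\widehat U = M(t)N_k(t,\xi)\,\mathcal E_k(t,t_0,\xi)\,N_k^{-1}(t_0,\xi)M^{-1}(t_0)\mathcal E(t_0,0,\xi)\widehat U_0$ on the low-frequency ball, insert the estimate of Lemma~\ref{sysDP:lem:solEst-2}, recognise $K(t,\xi)=M(t)N_2(t,\xi)e_1$ and $\widehat w_0$ from \eqref{eq:4:init-data-DP}, note that $\Xi_2$ is exactly the Fourier propagator of \eqref{sysDP:eq:par}, and attribute the logarithm to replacing $\Xi_k$ by $\Xi_2$ --- this is precisely what the paper states in the paragraph preceding the theorem.

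One slip worth flagging: on the gap $\delta\le t|\xi|^2\le\delta\log t$ one has $|\xi|^3 t = |\xi|\,(t|\xi|^2)\lesssim|\xi|\log t$, \emph{not} $\log t/|\xi|$; the corrected bound is in fact smaller, so it does not threaten the argument, but it means your concrete accounting of where the $\log(\mathrm e+t)$ is generated does not actually produce it. A more careful bookkeeping shows $\bigl|\int_{t_0}^t\bigl(f_1^{(k)}-f_1^{(2)}\bigr)\d\theta\bigr|\lesssim |\xi|^3 t + |\xi|^2\log t+|\xi|+t_0^{-2}$, and after multiplying by $|\Xi_2|\lesssim\mathrm e^{-\tilde\gamma t|\xi|^2}$ one obtains $\lesssim t^{-1/2}$ on the relevant zones; the paper itself only asserts (without computation) that the logarithm arises from the $\Xi_k$ versus $\Xi_2$ comparison, and remarks in the concluding section that the logarithm is most likely not sharp, so I would not press this point further, but your proof should not hinge on the incorrect intermediate inequality. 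Also note your first display has $M(t)N_2^{-1}(t,\xi)\ldots N_2(t_0,\xi)$, whereas the identification of $K$ and $\widehat w_0$ two lines later uses $M(t)N_2(t,\xi)$ and $N_2^{-1}(t_0,\xi)$; the latter is the one consistent with \eqref{eq:4:init-data-DP} and with the operator identity of Lemma~2.1.
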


\section{Concluding remarks}
\paragraph{(1)}The logarithm in \eqref{eq:diff-phen} is most likely not sharp, while we conjecture that the rate $(1+t)^{-1/2}$ is sharp as long as no further symmetry conditions are satisfied. 

\paragraph{(2)} Assumption (B1) may be strengthend by assuming that $B(t)\ge 0$. Under this assumption the problem has an additional symmetry in Fourier space,
\begin{equation}
   ( A(t,\xi) + \mathrm i B(t) )^* =  A(t,-\xi) - \mathrm i B(t) = - (A(t,\xi)+\mathrm i B(t)) 
\end{equation}
and it follows that the fundamental solution satisfies $\mathcal E(t,s,-\xi) = \mathcal E(t,s,\xi)^*$. In consequence, the coefficient ${\boldsymbol\beta}(t)$ of the associated parabolic problem is purely imaginary, the estimate in \eqref{eq:5.6} is of order $|\xi|^2$ and similar for the following estimates. Finally, the estimate \eqref{eq:diff-phen} improves by half an order.


\begin{thebibliography}{999}

\bibitem{BZ:2011} K. Beauchard and E. Zuazua.
\newblock {Large time asymptotics for partially dissipative hyperbolic systems.}
\newblock {\em Arch. Ration. Mech. Anal.}, {\bf 199} (2011) no. 1, 177--227. 

\bibitem{Chill} R.~Chill and A. Haraux.
\newblock {An optimal estimate for the difference of solutions of two abstract evolution equations.}
\newblock {\em J. Differential Equations},  {\bf 193} (2003), no. 2, 385--395.

\bibitem{Hsiao-Liu} Ling Hsiao and Tai-Ping Liu. 
\newblock {Convergence to nonlinear diffusion waves for solutions to a system of
hyperbolic conservation laws with damping.}
\newblock {\em Commun. Math. Phys.}, {\bf 143} (1992) 599--605.

\bibitem{Ikehata:2003b} R.~Ikehata.
\newblock {Decay estimates by moments and masses of initial data for linear
  damped wave equations.}
\newblock {\em Int. J. Pure Appl. Math.} {\bf  5} (2003) 77--94.

\bibitem{Ikehata:2003e} R.~Ikehata.
\newblock {Improved decay rates for solutions to one-dimensional linear and
  semilinear dissipative wave equations in all space.}
\newblock {\em J. Math. Anal. Appl.} {\bf 277} (2003) 555--570.

\bibitem{Ikehata:2003c} R.~Ikehata and K.~Nishihara.
\newblock {Diffusion phenomenon for second order linear evolution equations.}
\newblock {\em Stud. Math.}, {\bf 158} (2003) 153--161.

\bibitem{Jachmann:2010} K.~Jachmann and J.~Wirth.
\newblock Diagonalisation schemes and applications.
\newblock {\em Ann. Mat. Pura Appl.}, {\bf 189} (2010) 571--590.

\bibitem{KS:1985} S. Kawashima and Y. Shizuta.
\newblock {Systems of equations of hyperbolic--parabolic type with application to the discrete Boltzmann equation.}
\newblock {\em Hokkaido Math. J.}, {\bf 14} (1985) 249--275.

\bibitem{Milani:2001}
A.~Milani and Han~Yang.
\newblock {$L^1$ decay estimates for dissipative wave equations.}
\newblock {\em Math. Methods Appl. Sci.}, {\bf  24} (2001) 319--338.

\bibitem{Narazaki:2004}
T.~Narazaki.
\newblock {$L^p$--$L^q$ estimates for damped wave equations and their
  applications to semi-linear problem}.
\newblock {\em J. Math. Soc. Japan}, {\bf 56} (2004) 585--626.

\bibitem{Nishihara:1997} K.~Nishihara.
\newblock Asymptotic behaviour of solutions of quasilinear hyperbolic equations
  with linear damping.
\newblock {\em J. Differential Equations}, {\bf 137} (1997) 384--395.

\bibitem{Nishihara:2003} K.~Nishihara.
\newblock {$L^p$-$L^q$ estimates of solutions to the damped wave equation in
  3-dimensional space and their application.}
\newblock {\em Math. Z.}, {\bf  244} (2003) 631--649.

\bibitem{RTY:2011} P. Radu, G. Todorova and B. Yordanov.
\newblock Diffusion phenomenon in Hilbert spaces and applications.
\newblock {\em J. Differential Equations}, {\bf 250} (2011) 4200--4218.

\bibitem{RW:2008}
M. Ruzhansky  and J. Wirth. 
\newblock Dispersive estimates for T-dependent hyperbolic systems.
\newblock {\em Rend. Sem. Mat. Univ. Pol. Torino}, {\bf 66} (2008) 339--349. Corrigendum: {\bf 68} (2010), 93.

\bibitem{RW:2011} M. Ruzhansky and J. Wirth.
\newblock Dispersive estimates for $t$-dependent hyperbolic systems.
\newblock {\em J. Differential Equations} {\bf 251} (2011) 941--969.

\bibitem{Wirth:2007} J. Wirth.
\newblock Wave equations with time-dependent dissipation. II.
\newblock {\em J. Differential Equations}, {\bf 232} (2007) 74--103.

\bibitem{Wirth:2009} J. Wirth.
\newblock Block-diagonalisation of matrices and operators.
\newblock {\em Lin. Alg. Appl.}, {\bf 431} (2009) 895--902.

\bibitem{Wirth:2010} J. Wirth.
\newblock Energy inequalities and dispersive estimates for wave equations with time-dependent coefficients.
\newblock {\em Rend. Mat. Univ. Trieste}, {\bf 42 Suppl.} (2010) 205--219.

\bibitem{Villani:2011} C.~Villani.
\newblock {Hypocoercivity}.
\newblock {\em Mem. Amer. Math. Soc.}, {\bf 202} (2009) no. 950.


\end{thebibliography}
\end{document}